\documentclass[11pt,reqno]{amsart}

\usepackage{amssymb}
\usepackage{amsfonts}
\usepackage{amsthm}
\usepackage{amsmath}
\usepackage{bbm}
\usepackage{xcolor}
\usepackage{url}

\numberwithin{equation}{section}
\allowdisplaybreaks

\newtheorem{theorem}{Theorem}[section]
\newtheorem{proposition}[theorem]{Proposition}

\newtheorem{lemma}[theorem]{Lemma}

\theoremstyle{definition}

\theoremstyle{remark}
\newtheorem{remark}[theorem]{Remark}

\newcommand{\R}{\mathbb{R}}

\newcommand{\N}{\mathbb{N}}
\newcommand{\Z}{\mathbb{Z}}
\newcommand{\C}{\mathbb{C}}

\newcommand{\eps}{\varepsilon}

\newcommand{\scriptD}{\mathcal{D}}
\newcommand{\scriptE}{\mathcal{E}}

\newcommand{\scriptJ}{\mathcal{J}}
\newcommand{\scriptK}{\mathcal{K}}

\newcommand{\scriptS}{\mathcal{S}}
\newcommand{\scriptT}{\mathcal{T}}

\newcommand{\fE}{\mathcal{E}}

\newcommand{\qtq}[1]{\quad\text{#1}\quad}

\DeclareMathOperator*{\supp}{supp}
\DeclareMathOperator*{\diam}{diam}
\DeclareMathOperator*{\dist}{dist}

\begin{document}
\title[Extension maximizability from the hyperbolic paraboloid]{Existence of maximizers for $L^p$ Fourier extension from the hyperbolic paraboloid}

\author[Bruce]{Benjamin Bruce}
\author[Oliveira e Silva]{Diogo Oliveira e Silva}
\author[Stovall]{Betsy Stovall}

\address{Trinity College \\
300 Summit Street\\
Hartford, CT 06106\\
 USA.} 
\email{benjamin.bruce@trincoll.edu}

\address{  
Center for Mathematical Analysis, Geometry and Dynamical Systems \&
Instituto Superior T\'{e}cnico\\
Universidade de Lisboa\\
Av. Rovisco Pais\\ 
1049-001 Lisboa, Portugal.} 
\email{diogo.oliveira.e.silva@tecnico.ulisboa.pt}

\address{University of Wisconsin--Madison\\
480 Lincoln Drive\\ 
Madison, WI 53706\\
USA.}
\email{stovall@math.wisc.edu}


\subjclass[2020]{42B10}
\keywords{Sharp restriction theory, hyperbolic paraboloid, existence of maximizers, precompactness of maximizing sequences, Hunt--Marcinkiwicz interpolation, hyperbolic Schrödinger equation.}

\begin{abstract}
    We prove that maximizers exist and that maximizing sequences possess subsequences that converge modulo symmetries to maximizers for the $L^p \to L^q$ Fourier extension inequalities associated to the hyperbolic paraboloid in three ambient dimensions.  
\end{abstract}
\maketitle


\section{Introduction}


Consider the hyperbolic paraboloid in $\R^3$,
\begin{equation}
    \Sigma:=\{(\xi_1\xi_2,\boldsymbol\xi)\in\R^3: \boldsymbol\xi=(\xi_1,\xi_2)\in\R^2\},
\end{equation}
which is a doubly ruled surface with negative gaussian curvature at every point. 
The corresponding Fourier extension operator,
\begin{equation} \label{E:extn}
\scriptE f(t,\boldsymbol x) := \int_{\R^2} e^{i(t,\boldsymbol x)\cdot(\xi_1\xi_2,\boldsymbol\xi)}f(\boldsymbol\xi)\, d\boldsymbol\xi, \quad (t,\boldsymbol x)\in\R^{1+2},
\end{equation}
is initially defined for smooth functions with compact support in $\R^2$.
The restriction conjecture predicts the validity of the estimate
\begin{equation}\label{eq_ExtEst}
    \|\mathcal E f\|_{L^{2p'}(\R^{1+2})} \leq C_p \|f\|_{L^p(\R^2)},
\end{equation}
for all exponents $1\leq p<3$, where $\frac1p+\frac1{p'}=1$.

The case $p=1$ of \eqref{eq_ExtEst} is elementary and the case $p=2$ goes back to  early investigations of Stein \cite{St93}, Tomas \cite{To75}, and Strichartz \cite{St77}. Following subsequent work by  Vargas \cite{Va05}, Lee \cite{Le06}, Cho--Lee \cite{CL17}, and Kim \cite{Ki17} on the local extension problem off the scaling line, Stovall \cite{BShypparab} established  estimate \eqref{eq_ExtEst}  whenever  $1\leq p<\frac{13}5$.
This was accomplished via a refined version of the bilinear-to-linear deduction of Lee and Vargas for extension estimates from the hyperbolic paraboloid $\Sigma$ to the sharp line, and constituted  the first scalable extension estimate for a negatively curved hypersurface beyond the Stein--Tomas range. More recently, Demeter--Wu \cite{DW25} and then Guo--Liu--Xi \cite{GLX26} proved bilinear estimates which, via the aforementioned bilinear-to-linear mechanism (\cite[Theorem 1.1]{BShypparab}), extend the known range of exponents for which estimate \eqref{eq_ExtEst} holds to $1\leq p<\frac83$  and $1\leq p<\frac{11}4$, respectively. The latter remains the current record for this problem.

Off the scaling line, restriction estimates  to perturbed  hyperbolic paraboloids have been considered by Buschenhenke--Müller--Vargas  \cite{BMV20}--\cite{BMV22} and Guo--Oh \cite{GO24}; see also \cite{GLX26}.  The higher dimensional case has been addressed by Hickman--Iliopoulou \cite{HI22}, Barron \cite{Ba22}, and Li \cite{Li25}.

We are interested in the sharp form of \eqref{eq_ExtEst}, and consider the operator norm
\[{\bf A}_p:=\sup_{\|f\|_{L^p(\R^2)}=1}{\|\mathcal Ef\|_{L^{2p'}(\R^{1+2})}},\]
which corresponds to the best constant $C_p$ for which \eqref{eq_ExtEst} holds. One easily checks that ${\bf A}_1=1$,  that any nonnegative $f$ maximizes the corresponding inequality, and that in this case maximizing sequences need not be compact.

\subsection{Main result}
Our main result addresses the precompactness of arbitrary maximizing sequences for the nonendpoint instances of inequality \eqref{eq_ExtEst} modulo symmetries and, in particular, the existence of maximizers.

\begin{theorem}\label{thm_main}
    Assume that $\mathcal E$ extends as a bounded linear operator from $L^{p_0}(\R^2)$ to $L^{2p_0'}(\R^{1+2})$, for some $1<p_0<3$. 
    Then, for all $1<p<p_0$, there exist nonzero functions $f\in L^p(\R^2)$, such that $\|\mathcal Ef\|_{2p'}={\bf A}_{p} \|f\|_p.$
    Furthermore, if $\{f_n\}\subseteq L^p(\R^2)$ is any norm-one sequence with $\lim_{n\to\infty}\|\mathcal E f_n\|_{2p'}={\bf A}_{p}$, then there exists a subsequence $\{f_{n_k}\}$ of $\{f_n\}$ and a sequence $\{S_k\}$ of symmetries of $\mathcal E$ such that $S_kf_{n_k}$ converges in $L^p(\R^2)$ to a maximizer of $\mathcal E$.
\end{theorem}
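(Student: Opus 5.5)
The plan is the concentration-compactness scheme used for $L^p$-based extension problems (Stovall, Fanelli–Vega–Visciglia), adapted to the symmetry group of $\Sigma$ and to the fact that $p\neq 2$.

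\textbf{Symmetries and refined estimate.} First record the symmetries of $\scriptE$ on $L^p$. They are the frequency translations $f\mapsto f(\cdot-\boldsymbol\eta)$, which correspond to modulation combined with a shear of $(t,\boldsymbol x)$ in physical space. They also include the anisotropic dilations $f\mapsto \lambda_1^{1/p}\lambda_2^{1/p}f(\lambda_1\cdot_1,\lambda_2\cdot_2)$; this two-parameter family appears because $\xi_1\xi_2$ is invariant under $(\xi_1,\xi_2)\mapsto(\lambda\xi_1,\lambda^{-1}\xi_2)$, and it is the source of the extra noncompactness compared with the elliptic paraboloid. Finally there are the physical-space translations (modulations of $f$) and the swap $\xi_1\leftrightarrow\xi_2$.

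The key analytic input is a \emph{refined} extension estimate at exponent $p$. I would aim for
\[
\|\scriptE f\|_{2p'}\lesssim \|f\|_p^{1-\theta}\,\sup_{R}\bigl(|R|^{1/p-1/s}\|f\mathbbm 1_R\|_{s}\bigr)^{\theta}
\]
for some $\theta>0$ and some $s<p$, where $R$ ranges over dyadic rectangles (products of dyadic intervals, matching the anisotropic dilations). To get it, I would run the Lee–Vargas bilinear-to-linear decomposition from \cite[Theorem 1.1]{BShypparab}, using the hypothesized $L^{p_0}$ bound together with the trivial $L^1$ bound. Since $p<p_0$ strictly, Hunt–Marcinkiewicz (real) interpolation between these endpoints should supply the necessary gain, giving a Lorentz-space or dyadic-rectangle $\ell^r$ improvement with $r>p$.

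\textbf{Profile extraction and decomposition.} With the refined estimate in hand, take a maximizing sequence $f_n$. The estimate forces a rectangle $R_n$ on which $f_n$ has non-negligible mass. Rescale $R_n$ to the unit square by a symmetry $S_n$; then $S_nf_n$ is nontrivial near the unit square with bounded $L^s$ norm there. Next apply a Fourier-side (physical-space) localization to extract a weak limit $\phi\neq0$, using the modulation symmetry to handle the remaining translation in $(t,\boldsymbol x)$. Iterating gives a profile decomposition
\[
f_n=\sum_{j\le J}S_n^{j}\phi^j+r_n^J .
\]
Here the symmetries $S_n^j,S_n^k$ are pairwise orthogonal, meaning they diverge in the group, and $\limsup_n\|\scriptE r_n^J\|_{2p'}\to0$ as $J\to\infty$. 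Because $p\neq2$ there is no Pythagorean identity, so I would prove instead the asymptotic decoupling $\|f_n\|_p^p\ge\sum_j\|\phi^j\|_p^p-o(1)$. This can be done with the Brezis–Lieb lemma, after upgrading weak convergence to a.e. convergence of $S_n^{-1}f_n$ by a local-compactness argument: $\scriptE$ composed with a smooth cutoff is compact from $L^p$ into $L^{2p'}_{loc}$. On the extension side, orthogonality of the symmetries gives $\|\sum_j\scriptE S_n^j\phi^j\|_{2p'}^{2p'}=\sum_j\|\scriptE\phi^j\|_{2p'}^{2p'}+o(1)$.

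\textbf{Conclusion.} Since $2p'>p$, the function $t\mapsto t^{2p'/p}$ is strictly superadditive. Combining the two decouplings gives
\[
{\bf A}_p^{2p'}\le\sum_j {\bf A}_p^{2p'}\|\phi^j\|_p^{2p'}\le {\bf A}_p^{2p'}\Bigl(\sum_j\|\phi^j\|_p^p\Bigr)^{2p'/p}\le{\bf A}_p^{2p'} .
\]
Equality forces exactly one profile, with $\|\phi\|_p=1$. Then $S_n^{-1}f_n\rightharpoonup\phi$ with the norms converging, and uniform convexity of $L^p$ for $1<p<\infty$ upgrades this to strong convergence. Hence $\phi$ is a maximizer.

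\textbf{Main obstacle.} I expect the hardest step to be the refined estimate together with its interaction with the dilation group. The difficulty is that the hyperbolic scalings $(\lambda,\lambda^{-1})$ leave $\Sigma$ invariant, so mass can escape along a degenerate family of long thin rectangles that a ball-based refinement cannot detect. The refinement must therefore be phrased over all rectangles. It must also be checked that a bounded sequence concentrating on such sheared or thin regions converges, after the symmetry is applied, without being lost to the ruling lines of $\Sigma$.
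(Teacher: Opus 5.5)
The step that fails is your $L^p$ decoupling $\|f_n\|_p^p\ge\sum_j\|\phi^j\|_p^p-o(1)$. It is false for $1<p<2$, which is part of the theorem's range. Take $f_n=\phi+e^{in\xi_1}\phi$, i.e.\ two profiles differing by the modulation $m_{(0,n,0)}$. Since $|1+e^{in\xi_1}|^p$ converges weak-$*$ to its mean, $\|f_n\|_p^p\to c_p\|\phi\|_p^p$ with $c_p=\frac1{2\pi}\int_0^{2\pi}|1+e^{i\alpha}|^p\,d\alpha\le 2^{p/2}<2$. The Brezis--Lieb mechanism you propose cannot rescue this. We have $e^{in\xi_1}\phi\rightharpoonup0$ but $|e^{in\xi_1}\phi|=|\phi|$, so there is no a.e.\ convergence on the frequency side. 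Local compactness of a cut-off $\scriptE$ only gives a.e.\ convergence of $\scriptE(S_n^{-1}f_n)$ in $(t,\boldsymbol x)$, which is the tool for splitting $\|\scriptE\,\cdot\,\|_q^q$, not the $L^p$ norms of the inputs. What does hold is decoupling in $\ell^{\tilde p}$ with $\tilde p=\max\{p,p'\}$ (Proposition~\ref{L^p profile decomposition}(ii)). Your convexity argument still closes with $\tilde p$ in place of $p$, because $q=2p'>\tilde p$. However, the paper proves that decoupling only for sequences that are uniformly bounded and supported in a fixed ball. There, modulations are the only remaining symmetries, and the $L^2$ decomposition of Theorem~\ref{L^2 profile decomposition} can be transferred. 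The same reduction is also what produces a nonzero weak limit modulo modulations; non-negligible mass of $S_nf_n$ on the unit square does not give this by itself.

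So the missing idea is the frequency localization of Proposition~\ref{freq loc}. Using near-maximality, which is unavailable for a general bounded sequence, one shows that after a single symmetry in $\scriptS_p^+$ all but $\eps$ of the $L^p$ mass lies in $\{|\boldsymbol\xi|\le R,\ |f|\le R\}$. This uses the iterative tile selection of Proposition~\ref{truncation} together with the good/bad splitting and the bilinear decoupling Lemma~\ref{decoupling}. That lemma is exactly what is hidden in your sentence that orthogonality of the symmetries decouples the extensions, and it is not automatic. Tiles of fixed dimensions separated only in $\xi_1$ can meet a common ruling $\{\xi_2=\mathrm{const}\}$ of $\Sigma$. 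There the Lee--Vargas bilinear estimates, which need separation in both coordinates, give nothing directly, so a Whitney-type decomposition in $\xi_2$ and Tao--Vargas--Vega summation are needed (Case~3).

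Finally, your refined estimate is a legitimate target: it follows from Proposition~\ref{prop 2.1} via $\|g\|_p^p\le\|g\|_\infty^{p-s}\|g\|_s^s$ applied to $g=f\chi_{\tau\cap\{|f|\le|\tau|^{-1/p}\|f\|_p\}}$. But ``Hunt--Marcinkiewicz should supply the gain'' is not a proof, since ordinary real interpolation of restricted weak-type bounds yields no refinement. The paper needs the sharpened Lemma~\ref{L:general range simplification}, which exploits $u=p<q=v$ to control $\|\scriptE f\|_q$ by a power of $\sup_j\|2^j\scriptE\chi_{F_j}\|_q$. It then uses the constant-fiber-length decomposition of Lemma~\ref{L:significant rectangle} to locate a significant tile for characteristic functions.
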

\noindent 
The symmetries of $\mathcal E$ to which we refer here are dilations, frequency translations, and modulations, and will be described precisely in \S\ref{sec_symmetries}.  In the range $\frac{11}4\leq p_0<3$ where the boundedness of the extension operator is still an open question, Theorem \ref{thm_main} is conditional on improvements towards the restriction conjecture for the hyperbolic paraboloid.

\subsection{Previous work}
There is a large body of work aiming at establishing the existence of maximizers in sharp restriction theory, but this is classically limited to the case $p=2$. In fact, whenever Hilbert space techniques (e.g.\@ Plancherel) are available, they turn out to be very powerful.
We refer to \cite{DMPS18, FVV12} for the present case of the hyperbolic paraboloid $\Sigma$, and to the surveys \cite{FOS17, NOST22} for other related but distinct $L^2$ instances.

More recently, the following results, valid for arbitrary exponents $p\neq 2$ in all admissible dimensions, have appeared in the literature.

\noindent --  All valid, scale-invariant, nonendpoint Fourier restriction inequalities to the {\bf paraboloid} \cite{BS-parab-ext-exist}, the {\bf cone} \cite{NOSSS25}, and the {\bf moment curve}\footnote{The more general case of monomial curves was subsequently treated in \cite{BS25}.} \cite{BS23}  have maximizers, and $L^p$-normalized maximizing sequences are precompact modulo symmetries. 

\noindent -- On the {\bf sphere} \cite{FS22}, precompactness of $L^p$-normalized maximizing sequences is ensured  off the parabolic scaling line, modulo spacetime modulations; on the parabolic scaling line, a   ``precompactness vs.\@ antipodal concentration'' dichotomy is established in the same paper.

The proofs follow the general principle established for the paraboloid in \cite{BS-parab-ext-exist} of first showing that maximizing sequences possess good frequency localization, and then using the $L^2$-theory to construct a profile decomposition of frequency-localized sequences with non-negligible extensions.
There are, however,  multiple twists and turns along the way, of which we highlight the following:

\begin{itemize}
\item[(\cite{NOSSS25})] The symmetry group of the cone is more complex than that of the paraboloid or sphere, leading to a two-step frequency localization.\smallskip
\item[(\cite{BS23, BS25})] Adapting the concentration-compactness methods to Fourier restriction on higher codimensional manifolds like the moment curve is a challenge which requires a multilinear generalization of the bilinear-to-linear argument of Tao--Vargas--Vega \cite{TVV98}.\smallskip
\item[(\cite{FS22})] On the sphere, the lack of scaling symmetry  and the existence of distinct points with parallel normal vectors are additional difficulties. Scaling should be treated as an almost-symmetry, and the fact that concentration at two points is better than concentration
at a single point is a non-local phenomenon that had been identified by Christ--Shao \cite{CS12} and explored by Frank--Lieb--Sabin \cite{FLS16} when $p=2$.
\end{itemize}

\subsection{What's new?}
None of the above proofs carries over easily to the hyperbolic paraboloid.
The major complication is tied to  the negative curvature of $\Sigma$.
Bilinear-to-linear reduction is much more indirect, and the refined Strichartz inequality \eqref{prop 2.1 est} turned out to be a real challenge. In fact, bilinear estimates for negatively curved surfaces require a stronger separation condition, which results in some loss when deriving linear estimates from bilinear ones.
Usually, one proves something for characteristic functions and then interpolates to extend to more general functions. 
In the proof of Theorem \ref{thm_main}, we instead consider ``funky'' characteristic functions with constant fiber length.
This requires a sharpened  form of the Hunt--Marcinkiewicz interpolation theorem, which is interesting in its own right.

\subsection{Three final remarks}

\begin{remark}
By duality and strict convexity of $L^p$ when $1<p<\infty$, the analogous result to Theorem \ref{thm_main} holds for the restriction operator. Details can be found in \cite[\S 6]{BS-parab-ext-exist}.
\end{remark}
\begin{remark}
It would be interesting to establish variants of Theorem \ref{thm_main} for perturbed and higher dimensional hyperbolic paraboloids, and similarly  for the remaining cones and hyperboloids corresponding to Cases II--III of Strichartz's paper \cite{St77}. However, a hard barrier to general $L^p$ results lies in the fact that the hyperbolic paraboloid $\Sigma\subset\R^3$ and the hyperbolic hyperboloid\footnote{Restriction estimates for the hyperbolic hyperboloid in $\R^3$ beyond Stein--Tomas  were obtained by the authors in \cite{BOSS21}. We believe that it should be possible to adapt the methods of the present paper and \cite{FS22} to obtain concentration compactness results for the hyperbolic hyperboloid as well, but the existence question is likely to be more complicated due to the lack of a scaling symmetry and the presence of antipodal points.} in $\R^3$ given by $|\boldsymbol{\xi}|^2=1+\tau^2$,  $\boldsymbol{\xi}\in\R^2$,  remain to date the unique negatively curved hypersurfaces  for which scalable extension estimates are known
beyond the Stein--Tomas range. 
\end{remark}
\begin{remark}
It is known that (except for the trivial case $p=1$) global maximizers of \eqref{eq_ExtEst} are never gaussians.\footnote{In fact, gaussians are not even critical points; see \cite{COS22}.} 
In finite fields, maximizers for the $L^2\to L^4$ extension inequality from the hyperbolic paraboloid $\{(\tau,\boldsymbol\xi)\in\mathbb F_q\times\mathbb F_q^2: \tau=\xi_1^2-\xi_2^2\}\subset \mathbb F_q^3$ have been recently classified in \cite[Theorem 1.4]{GQOS24}, but the corresponding euclidean problem remains open.
\end{remark}

\subsection{Outline}
In \S\ref{sec_symmetries}, we describe the symmetries of the extension operator $\mathcal{E}$ which will play a role in our analysis.
We perform a range localization and a frequency decomposition in \S\ref{sec_skeleton}, and then perform frequency and spatial localizations in  
 \S\ref{sec_freqloc}
and \S\ref{sec_spatialloc}, respectively. 
We then complete the proof of Theorem \ref{thm_main}.

\subsection{Terminology}
We use $X\lesssim Y$ or $Y\gtrsim X$ to denote the estimate $|X|\leq CY$ for an absolute positive constant $C$,  $X\simeq Y$ to denote the estimates $X\lesssim Y \lesssim X$, and $X\cong Y$ to denote the identity $X=CY$. 
We  often require the implied constant $C$ in the above notation to depend on additional parameters, which we will indicate by subscripts (unless explicitly omitted); thus for instance $X \lesssim_j Y$ denotes an estimate of the form $|X| \leq C_jY$ for some $C_j$ depending on $j$. 

We let $p_0$ be as in the statement of Theorem \ref{thm_main}.
Henceforth, whenever $1<p<p_0$ we write $q:=2p'$ and set $q_0:=2p_0'.$
 
\section{Symmetries}\label{sec_symmetries}

In this article, we will consider a subgroup $\scriptS_p$ of the automorphism group of $L^p(\R^2)$, which is generated by the following types of symmetries:  
\begin{enumerate}
    \item Dilations:  $D_{\boldsymbol\lambda} f(\boldsymbol\xi) = \lambda_1^{1/p}\lambda_2^{1/p}f(\lambda_1 \xi_1,\lambda_2 \xi_2)$,  ${\boldsymbol\lambda} \in (0,\infty)^2$
    \item Frequency translations:  $\tau_{\boldsymbol\xi^0}f(\boldsymbol\xi) = f(\boldsymbol\xi-\boldsymbol\xi^0)$
    \item Modulations:  $m_{(t^0,\boldsymbol x^0)}f(\xi) = e^{i(t^0,\boldsymbol x^0)\cdot(\xi_1\xi_2,\boldsymbol\xi)}f(\boldsymbol\xi)$.
\end{enumerate}
We observe that each of these symmetries corresponds to an $L^q$ automorphism via:  
\begin{gather*}
    (\scriptE D_{\boldsymbol\lambda} f)(t,\boldsymbol x) = \lambda_1^{-2/q}\lambda_2^{-2/q}(\scriptE f)([\lambda_1\lambda_2]^{-1}t,\lambda_1^{-1}x_1,\lambda_2^{-1}x_2)\\
    (\scriptE \tau_{\boldsymbol\xi^0}f)(t,\boldsymbol x) = e^{i(t,\boldsymbol x)\cdot(\xi_1^0\xi_2^0,\boldsymbol\xi^0)}(\scriptE f)(t,x_1+t\xi_2^0,x_2+t\xi_1^0)\\
    (\scriptE m_{(t^0,\boldsymbol x^0)}f)(t,\boldsymbol x) = (\scriptE f)(t+t^0,\boldsymbol x+\boldsymbol x^0).
\end{gather*}
We will distinguish one subgroup of $\scriptS_p$, namely $\scriptS_p^+$, the subgroup generated by the dilations and frequency translations.


\section{Range localization and frequency decomposition}\label{sec_skeleton}


The main result of this section is Proposition \ref{truncation}, which will allow us to identify a controllable number of axis-parallel rectangles, that we call \textit{tiles}, from which a given $f$ has significant extension. Before stating and proving this result, we establish some preliminary lemmas and propositions. Along the way, we prove a sharpened form of the Hunt--Marcinkiewicz interpolation theorem, Lemma \ref{L:general range simplification}, which may be of independent interest.

Let $\scriptD$ denote the set of dyadic tiles in $\R^2$, that is,
\begin{align*}
\scriptD = \{{\boldsymbol\zeta}+[0,2^{k_1})\times[0,2^{k_2})\colon k_1,k_2 \in \Z,~{\boldsymbol\zeta} \in 2^{k_1}\Z\times2^{k_2}\Z\}.
\end{align*}

\begin{proposition}\label{prop 2.1}
For every $p \in (1,p_0)$, there exists $\nu \in (0,1)$ such that 
\begin{align}\label{prop 2.1 est}
\|\scriptE f\|_q \lesssim \sup_{\tau \in \scriptD} \|f\chi_{\tau \cap \{|f| \leq |\tau|^{-\frac1p}\|f\|_p\}}\|_p^\nu \|f\|_p^{1-\nu},
\end{align}
for every $f \in L^p$.
\end{proposition}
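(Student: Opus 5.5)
Our plan is to deduce \eqref{prop 2.1 est} from a refined, restricted-type estimate for the hyperbolic paraboloid, and then pass to general $f$ by a sharpened Hunt--Marcinkiewicz interpolation that keeps track of the truncation $\{|f|\le|\tau|^{-1/p}\|f\|_p\}$. By homogeneity we normalize $\|f\|_p=1$. The quantity on the right-hand side of \eqref{prop 2.1 est} is invariant under $\scriptS_p^+$: dilations and frequency translations map dyadic tiles to tiles comparable to dyadic ones and preserve the threshold $|\tau|^{-1/p}\|f\|_p$. So it suffices to prove a scale-invariant statement.

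\textbf{Step 1: refined bilinear-to-linear estimate.} Using the hypothesis that $\scriptE$ is bounded $L^{p_0}\to L^{q_0}$, we interpolate with the bilinear estimates that underlie \cite[Theorem 1.1]{BShypparab}. The goal is a bound in which the interaction of two pieces $f\chi_{\tau}$, $f\chi_{\tau'}$ over a pair of tiles that are separated in \emph{both} coordinates (the separation required for negative curvature) gains a factor $\bigl(\|f\chi_\tau\|_{p_1}|\tau|^{1/p-1/p_1}\bigr)^{\theta}$ for some $p_1<p$ and some $\theta>0$. We would run the Whitney-type decomposition of $\R^2\times\R^2$ adapted to $\Sigma$ from \cite{BShypparab}. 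Here the pairs of parallel rectangles are handled by the refined Lee--Vargas argument, and the $\ell^{q/2}$ sum over the decomposition is controlled by almost-orthogonality of the products $\scriptE f_\tau\,\overline{\scriptE f_{\tau'}}$ in $L^{q/2}$, which holds since $q/2\ge 2$ is false only for $q<4$, a case treated by interpolation as in \cite{TVV98}. This would give a preliminary inequality
\[\|\scriptE f\|_q\lesssim \sup_{\tau}\bigl(|\tau|^{\frac1p-\frac1{p_1}}\|f\chi_\tau\|_{p_1}\bigr)^{\nu}\|f\|_p^{1-\nu}.\]

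\textbf{Step 2: from the $L^{p_1}$ quantity to truncated $L^p$.} Step 1 falls short because $\|f\chi_\tau\|_{p_1}$ can be large owing to tall spikes of $f$. We first prove the estimate for ``funky'' characteristic functions, namely functions whose level sets have constant fiber length in one variable. For these, $|\tau|^{1/p-1/p_1}\|f\chi_\tau\|_{p_1}$ is controlled by $\|f\chi_{\tau\cap\{|f|\le|\tau|^{-1/p}\}}\|_p$ up to harmless losses, because the portion where $|f|>|\tau|^{-1/p}$ forces the relevant tile to shrink. We then decompose a general $f$ into dyadic level pieces $f_j=f\chi_{\{2^j\le|f|<2^{j+1}\}}$, further split by fiber length, and apply a Hunt--Marcinkiewicz-type summation. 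The key point is that we need a version of the interpolation theorem in which the restricted-type bound carries the supremum functional on its right-hand side, together with an exponent gap that makes the sum over levels converge geometrically. The pieces of $f$ at heights far above $|\tau|^{-1/p}$ on a given tile are absorbed by choosing a smaller tile.

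\textbf{Main obstacle.} We expect the hardest part to be the negative-curvature bilinear reduction in Step 1. Tiles are only transversal when they are separated in both coordinates, so thin strips along the ruling lines have no bilinear gain. We would first try to handle these strips by noting that $\scriptE$ restricted to a strip of width $2^{k}$ is essentially a one-dimensional extension problem, and bounding that contribution directly through the $L^{p_0}$ hypothesis with a gain from the truncation. The secondary difficulty is the sharpened interpolation step: the fiber-length decomposition must not cost powers of the number of levels.
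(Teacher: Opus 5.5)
There is a genuine gap. It lies in Step~1, and Step~1 in fact carries the whole proposition.

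\textbf{Step~2 is not where the difficulty is.} The passage you postpone to Step~2 is elementary. Normalize $\|f\|_p=1$, set $\eta:=\sup_{\tau\in\scriptD}\|f\chi_{\tau\cap\{|f|\le|\tau|^{-1/p}\}}\|_p$, take $\lambda\ge1$, and split $f\chi_\tau$ at height $\lambda|\tau|^{-1/p}$. Since $p_1<p$, the high part contributes at most $\lambda^{1-p/p_1}$ to $|\tau|^{\frac1p-\frac1{p_1}}\|f\chi_\tau\|_{p_1}$. By H\"older, the low part contributes at most $\|f\chi_{\tau\cap\{|f|\le\lambda|\tau|^{-1/p}\}}\|_p$, which is $\lesssim\lambda\eta$ after covering $\tau$ by $\sim\lambda^p$ dyadic subtiles. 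Optimizing in $\lambda$ gives $\sup_\tau|\tau|^{\frac1p-\frac1{p_1}}\|f\chi_\tau\|_{p_1}\lesssim\eta^{1-p_1/p}$. Conversely, $\|f\chi_{\tau\cap\{|f|\le|\tau|^{-1/p}\}}\|_p\le(|\tau|^{\frac1p-\frac1{p_1}}\|f\chi_\tau\|_{p_1})^{p_1/p}$. So your preliminary inequality \emph{is} the proposition, up to the value of $\nu$. No interpolation or fiber-length bookkeeping is needed after it, and everything hinges on proving it for general $f$.

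\textbf{Step~1 is not proved.} The route you sketch is a Whitney decomposition with $\ell^{q/2}$ almost orthogonality, as in \cite{TVV98}. This is precisely what does not carry over to $\Sigma$. Bilinear gain needs separation in both coordinates, so the decomposition is two-parameter, with pairs of rectangles of every eccentricity. Summing the bilinear bounds over both scale parameters then incurs a loss on the scaling line. That is why the bilinear-to-linear deduction of \cite{BShypparab} is carried out at restricted type, via fiber-length decompositions, rather than by a Whitney argument for general $f$. Your idea of treating interactions near the rulings as ``essentially one-dimensional'' and bounding them ``through the $L^{p_0}$ hypothesis with a gain from the truncation'' does not address this summation and is not an argument.

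\textbf{What the paper does instead.} It supplies two ingredients you are missing.

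First, it proves a set-level estimate, $\|\scriptE\chi_E\|_q\lesssim\sup_{\tau\in\scriptD,\,|\tau|\le|E|}|\tau\cap E|^{\nu/p}|E|^{(1-\nu)/p}$, in four moves.
\begin{enumerate}
\item Split $E$ by vertical fiber length into pieces $E^k$.
\item Use \cite[Proposition~2.2]{BShypparab} to decompose each $E^k$ into pieces $E^k_\delta$, covered by $O(\delta^{-C})$ tiles of height $2^k$ and measure at most $|E^k|$, such that every measurable $F\subseteq E^k_\delta$ has $\|\scriptE\chi_F\|_q\le\delta|E^k|^{1/p}$.
\item Sum over fiber lengths using the decay $2^{-c_0|k_1-k_N|}$ of \cite[Lemma~3.2]{BShypparab}; this costs only $(\log\delta^{-1})^{N+1}$, which the gain in $\delta$ absorbs.
\item Pigeonhole a single tile among the $O(\delta^{-C})$.
\end{enumerate}

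Second, it uses a sharpened Hunt--Marcinkiewicz lemma to pass from sets to general $f$. The lemma uses only the endpoint bounds at $(p_0,q_0)$ and $(1,\infty)$, together with the Lorentz improvement of the second index from $p$ to $q>p$. It gives $\|\scriptE f\|_q\lesssim\sup_{j,E}\|2^j\scriptE\chi_{E\cap E_j}\|_q^{\nu}\|f\|_p^{1-\nu}$ with $\nu>0$, where $E_j:=\{2^{j-1}<|f|\le2^j\}$.

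The truncation $\{|f|\le|\tau|^{-1/p}\|f\|_p\}$ appears only at the very end, from $2^{jp}|E_j|\lesssim\|f\|_p^p$ and $|\tau|\le|E_j|$. Your Step~2 names the right tools (level sets, fiber length, Hunt--Marcinkiewicz). But it aims them at a non-problem, and never states or proves the set-level estimate they are meant to transfer to general $f$.
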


Proposition \ref{prop 2.1} will follow from two lemmas below. The first one is an ``entropy reduction'' (Lemma~\ref{L:range simplification}) and the second one allows for the  isolation of a significant dyadic tile (Lemma~\ref{L:significant rectangle}).

\begin{lemma} \label{L:range simplification}
For every $p \in (1,p_0)$, there exists $\nu \in (0,1)$ such that
$$
\|\scriptE f\|_q \lesssim \sup_{j\in\Z}\sup_E \|2^j\scriptE f_{E,j}\|_q^\nu \|f\|_p^{1-\nu},
$$
for every $f \in L^p$, where $E$ ranges over measurable subsets of $\R^2$ and $f_{E,j} := \chi_{E \cap {\{2^{j-1}<|f|\leq 2^j\}}}$. 
\end{lemma}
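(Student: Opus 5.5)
The plan is to decompose $f$ by level sets and interpolate between the hypothesis at $p_0$ and the trivial $L^1\to L^\infty$ bound, in the spirit of the Hunt--Marcinkiewicz theorem. By homogeneity we normalize $\|f\|_p=1$. We may also reduce to $|f|$ being essentially a sum of dyadic-level pieces. Write $f=\sum_j f_j$ with $f_j := f\chi_{\{2^{j-1}<|f|\le 2^j\}}$, so that $|f_j|\simeq 2^j f_{\R^2,j}$. Since $f_j$ is not quite $2^j$ times a characteristic function (there is a phase and a modulus varying within a factor of $2$), the phase is the first issue. I would handle it by a further finite decomposition: split the range of $f$ into sectors of the complex plane and dyadic sub-shells of width $1+\delta$. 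Each piece then equals $2^j$ times a characteristic function times a nearly constant factor, plus an error that is small in $L^p$. Alternatively one can use the layer-cake formula
\[
f_j = \int_0^{2^j}\chi_{\{|f_j|>s\}}\,\mathrm{sgn}(f)\,ds .
\]
The cleanest way to make this precise is to let the sets $E$ absorb this: since $E$ is arbitrary, taking $E=\{|f|>s\}$ intersected with a phase sector gives characteristic functions of the required form. The supremum over $E$ in the statement is designed exactly to allow this.

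Set $\Lambda := \sup_{j}\sup_E \|2^j\scriptE f_{E,j}\|_q$. The key step is a square-function/interpolation estimate of the form
\[
\|\scriptE f\|_q \lesssim \Big(\sum_j \|\scriptE f_j\|_q^{r}\Big)^{1/r}\quad\text{(or a variant)},
\]
which is false in general. So I would instead argue via Marcinkiewicz-type restricted weak bounds. The hypothesis at $p_0$ and the trivial $L^1\to L^\infty$ bound give, for each level piece $g_j := 2^j f_{E,j}$ with $|E_j|:=|E\cap\{2^{j-1}<|f|\le 2^j\}|$, two bounds:
\[
\|\scriptE g_j\|_{q_0}\lesssim 2^j|E_j|^{1/p_0}, \qquad \|\scriptE g_j\|_\infty\le 2^j|E_j| .
\]
Hölder interpolation between $q_0$ and $\infty$ at $q$ then gives $\|\scriptE g_j\|_q \lesssim 2^j|E_j|^{1/p}\cdot(2^j|E_j|^{\theta})^{\epsilon}$, where the extra factor measures the deviation from the scaling line. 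Combining these with the bound $\|\scriptE g_j\|_q\le\Lambda$ yields
\[
\|\scriptE g_j\|_q\lesssim \min\big(\Lambda,\ (2^{pj}|E_j|)^{1/p}\,\cdot\,\text{(off-line gain)}\big).
\]
To sum over $j$, I would use the standard Marcinkiewicz trick. Bound $\|\scriptE f\|_q$ by duality against $h\in L^{q'}$. Split $h$ by level sets as well, and use the two estimates (the $p_0$ bound and the $L^1\to L^\infty$ bound) on the pieces $(f_j,h_k)$ to obtain geometric decay in $|j-\alpha k|$ off a diagonal. Near the diagonal, interpolate with the $\Lambda$ bound, which introduces the power $\nu$. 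Since $\sum_j 2^{pj}|\{|f|\sim2^j\}|\simeq 1$, the final estimate takes the form $\Lambda^\nu\cdot 1^{1-\nu}$.

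The main obstacle is the summation step: I need to ensure that the total loss from the near-diagonal terms is only a power of $\Lambda$, and is not multiplied by the number of levels. To handle this I would exploit the strict gap $p<p_0$. Interpolating $\|\scriptE g_j\|_q\le\Lambda^{\nu}\,(2^{j}|E_j|^{1/p_1})^{1-\nu}$ for some $p_1$ with $p<p_1<p_0$ gives an $\ell^{p_1}$-type sum of level contributions. The sum $\sum_j (2^{pj}|E_j|)^{(1-\nu)/p}$ is controlled because $p_1>p$ allows choosing $\nu$ so that $(1-\nu)q\ge p$ after a Lorentz-space (Hunt) refinement, namely the bound $L^{p,q}\to L^q$ at exponent $p$ obtained by real interpolation. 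This sharpened Hunt interpolation is where the real work lies, and where $\nu\in(0,1)$ is produced.
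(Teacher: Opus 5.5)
Your outline has the right ingredients: duality, level-set decompositions of both $f$ and the dual function, the endpoint bounds at $(p_0,q_0)$ and $(1,\infty)$, and extracting a power of $\Lambda$. There is a genuine gap, though. The summation step is the entire content of the lemma; you defer it, and the concrete suggestions you give for it do not work.

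\begin{itemize}
\item The bound $\|\scriptE g_j\|_q\lesssim 2^j|E_j|^{1/p_1}$ with $p_1\neq p$ is false. Isotropic dilation $g\mapsto g(\lambda\,\cdot)$ forces $1/p_1=1-2/q=1/p$.
\item Summing per-level bounds like $\Lambda^{\nu}(2^j|E_j|^{1/p})^{1-\nu}$ over $j$ gives an $\ell^{1-\nu}$ sum of a sequence known only to lie in $\ell^p$. This diverges in general, and the $\ell^q$ decoupling that would rescue it is, as you note, false.
\item Hunt's $L^{p,q}\to L^q$ bound by itself contains no $\Lambda$.
\item The off-diagonal decay is misplaced. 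Pairing the height-$2^j$ piece of $f$ with the height-$2^{j'}$ piece of the dual function, the endpoints give $2^{j+j'}\min\{|F_j|^{1/p_0}|G_{j'}|^{1/q_0'},\,|F_j||G_{j'}|\}$. This has no decay in $|j-\alpha j'|$. The decay is in a linear form $k(a,a')$ of the set sizes, $a\approx\log_2|F_j|$ and $a'\approx\log_2|G_{j'}|$.
\end{itemize}

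The missing idea is the proof of Lemma~\ref{L:general range simplification}, used with endpoints $(p_0,q_0)$, $(1,\infty)$ and with $u=p$, $v=q$: never sum norms over the levels of $f$ alone. The steps are as follows.

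\begin{itemize}
\item Pair against $g=\sum_{j'}2^{j'}\chi_{G_{j'}}\geq 0$ with $\|g\|_{q'}=1$. Since $2^{j'}\chi_{G_{j'}}\le g$, each term obeys $2^{j+j'}\int_{G_{j'}}|\scriptE\chi_{F_j}|\lesssim\min\{\Lambda,B_{j,j'}\}\le\Lambda^{1-\mu}B_{j,j'}^{\mu}$. Here $B_{j,j'}$ is the restricted weak-type bound above, so it suffices to show $\sum_{j,j'}B_{j,j'}^{\mu}\lesssim 1$.
\item Group $j$ by $|F_j|\sim 2^a$ and $j'$ by $|G_{j'}|\sim 2^{a'}$; each group sums geometrically to its top index. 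This gives $B^{\mu}\lesssim 2^{-c\mu|k(a,a')|}x_a^{\mu}y_{a'}^{\mu}$, with $\sum_a x_a^p\lesssim\|f\|_p^p$, $\sum_{a'}y_{a'}^{q'}\lesssim 1$, and $c=\min\{\theta,1-\theta\}>0$, where $1/p=(1-\theta)/p_0+\theta$.
\item For fixed $k$, the pairs with $k(a,a')\approx k$ lie along a line with bounded multiplicity. H\"older with exponents $p/\mu$ and $q'/\mu$ closes the sum exactly when $\mu/p+\mu/q'=1$, and this yields the lemma with $\nu=1-\mu$.
\end{itemize}

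This is possible with $\mu<1$ precisely because $p<q$, i.e.\ $1/p+1/q'>1$. So the power of $\Lambda$ comes from the Lorentz-improving gap between $p$ and $q$. The gap $p<p_0$ only supplies $0<\theta<1$, and hence the geometric factor in $|k|$.

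A smaller point concerns the phase.
\begin{itemize}
\item $\chi_{\{|f_j|>s\}}\mathrm{sgn}(f)$ is not a characteristic function.
\item A phase-sector approximation leaves an error small only in $L^p$, which cannot be absorbed into $\Lambda^{\nu}\|f\|_p^{1-\nu}$ when $\Lambda$ is small.
\item Instead, split exactly into $(\Re f)_\pm$ and $(\Im f)_\pm$ and expand each in binary digits, as the paper does. Since $F_j\subseteq\bigcup_{\ell\ge 0}\{2^{j+\ell-1}<|f|\le 2^{j+\ell}\}$, a geometric sum gives $\|2^j\scriptE\chi_{F_j}\|_q\lesssim\Lambda$, which is the bound used above.
\end{itemize}
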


\begin{lemma} \label{L:significant rectangle}
For every $p \in (1,p_0)$, there exists $\nu \in (0,1)$ such that
$$
\|\scriptE \chi_E\|_q \lesssim \sup_{\tau \in \scriptD:|\tau| \leq |E|}|\tau \cap E|^\frac{\nu}{p}|E|^\frac{1-\nu}{p}
$$
for every measurable $E \subseteq \R^2$.
\end{lemma}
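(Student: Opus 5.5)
The plan is to run the bilinear-to-linear machinery of \cite{BShypparab} directly on characteristic functions. The point is to keep track of the density $|\tau\cap E|/|\tau|$ of $E$ in each tile, rather than discarding it. First normalize: dilations $D_{\boldsymbol\lambda}$ are symmetries of $\scriptE$ that preserve the dyadic structure of $\scriptD$ up to harmless shifts of the grid, so we may assume $|E|\simeq 1$. Set $\delta:=\sup_{\tau\in\scriptD,\,|\tau|\le|E|}|\tau\cap E|$. The goal is then $\|\scriptE\chi_E\|_q\lesssim \delta^{\nu/p}$.

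Next comes the bilinear decomposition, carried out as in \cite[Theorem 1.1]{BShypparab}. Negative curvature forces separation in \emph{both} coordinates, so we take a Whitney decomposition of $\R\times\R$ minus the diagonal in each coordinate separately and form their product. Up to a null set, this writes $(\scriptE\chi_E)^2=\sum_{(\tau,\tau')}\scriptE\chi_{E\cap\tau}\,\scriptE\chi_{E\cap\tau'}$. Here $\tau=I_1\times I_2$ and $\tau'=I_1'\times I_2'$ are dyadic tiles of common dimensions $2^{k_1}\times 2^{k_2}$, and $I_i,I_i'$ are Whitney-related (separated by $\sim 2^{k_i}$). For each such pair, the known bilinear estimate (Lee/Vargas type, or \cite{DW25,GLX26}) at some exponent $r<q/2$, after rescaling $\tau$ to the unit square, gives
\[
\|\scriptE\chi_{E\cap\tau}\scriptE\chi_{E\cap\tau'}\|_{r}\lesssim |\tau|^{\alpha}\,(|E\cap\tau||E\cap\tau'|)^{\frac12}
\]
with the scaling-dictated $\alpha$. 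Interpolating this against the trivial $L^\infty$ bound $|E\cap\tau||E\cap\tau'|$ yields
\[
\|\scriptE\chi_{E\cap\tau}\scriptE\chi_{E\cap\tau'}\|_{q/2}\lesssim (|E\cap\tau||E\cap\tau'|)^{\frac1p}\Big(\tfrac{|E\cap\tau||E\cap\tau'|}{|\tau|^2}\Big)^{\eps}
\]
for some $\eps=\eps(p)>0$. The hypothesis on $p_0$ enters if the bilinear exponent alone is insufficient: there we would instead interpolate with the linear $L^{p_0}\to L^{q_0}$ bound, which has room since $p<p_0$.

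The main obstacle is summing over pairs. On the scaling line the terms must be recombined in $L^{q/2}$ with $q/2=p'\in(3/2,\infty)$. I would use the quasi-orthogonality lemma from \cite{BShypparab} (a Tao--Vargas--Vega-type $\ell^{s}$ almost-orthogonality for the product Whitney decomposition, with $s=\min(q/2,(q/2)')$), which loses only because of the weaker separation. This gives
\[
\|\scriptE\chi_E\|_q^2\lesssim\Big(\sum_{(\tau,\tau')}\big[(|E\cap\tau||E\cap\tau'|)^{\frac1p}\,\theta_\tau^{\eps}\theta_{\tau'}^{\eps}\big]^{s}\Big)^{1/s},\qquad \theta_\tau:=\tfrac{|E\cap\tau|}{|\tau|}.
\]
To finish, split the sum. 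Tiles with $|\tau|\le|E|=1$ satisfy $\theta_\tau^{\eps}|E\cap\tau|^{1/p}\le \delta^{\eps'}|E\cap\tau|^{1/p-\eps''}$ for small $\eps',\eps''$. Tiles with $|\tau|>|E|$ satisfy $\theta_\tau\le|\tau|^{-1}$, which gives geometric decay in $k_1+k_2$. The remaining sum $\sum_\tau|E\cap\tau|^{s/p-}$ must then be controlled by $|E|$ uniformly over each scale $(k_1,k_2)$. Because $s/p>1$ up to the small loss, this reduces to $\sum_\tau|E\cap\tau|=|E|$. The delicate point is the sum over the two-parameter family of scales $(k_1,k_2)$. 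I expect to handle it by using the factor $\theta_\tau^{\eps}\le\min(1,|\tau|^{-1})^{\eps}$ together with the trivial bound $|E\cap\tau|\le\min(|\tau|,|E|)$. The eccentric tiles, with $|\tau|\sim1$ but $k_1\to\infty$ and $k_2\to-\infty$, are where this could fail. For those I would exploit that only $O(1)$ Whitney partners exist per tile at each scale and that $E$ has finite measure, so $\sup_\tau|E\cap\tau|\to0$ along such eccentric sequences, at the cost of a slightly smaller $\nu$.
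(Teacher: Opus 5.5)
There is a genuine gap, and it is the step you call delicate: the sum over shapes $(k_1,k_2)$ with $k_1+k_2$ fixed.

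The Tao--Vargas--Vega almost orthogonality is available only within one shape. Pairs of equal area but different eccentricity produce products whose space--time Fourier supports need not be separated. So your display with a single $\ell^s$ sum over the pairs of all shapes is unjustified, and across shapes you are left with the triangle inequality, which is what your last paragraph proposes. By the dilations $D_{\boldsymbol\lambda}$, any per-pair bound in terms of $|\tau|,|E\cap\tau|,|E\cap\tau'|$ sees only the area, and for a fully occupied Whitney pair it must be at least $c|\tau|^{2/p}$.

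Now take $p>2$ and $E=\bigcup_{j=1}^N R_j$, where $R_j$ is a $2^jN^{-1/2}\times 2^{-j}N^{-1/2}$ rectangle and the $R_j$ are mutually far apart in both coordinates.
\begin{itemize}
\item Then $|E|=1$ and every tile of area at most $1$ meets at most one $R_j$, so $\sup_{|\tau|\le|E|}|E\cap\tau|\le N^{-1}$. The lemma therefore demands $\|\scriptE\chi_E\|_q\lesssim N^{-\nu/p}$.
\item But each $R_j$ contains a fully occupied Whitney pair of area $\sim N^{-1}$ at its own shape, and these $N$ shapes are distinct.
\item Hence your bound for $\|\scriptE\chi_E\|_q^2$ is $\gtrsim N\cdot N^{-2/p}=N^{1-2/p}\to\infty$, whatever per-pair estimate you use.
\item Centering all $R_j$ at the origin gives a truncated hyperbolic cross with $|E|\simeq1$ and $\sup_{|\tau|\le|E|}|E\cap\tau|\lesssim N^{-1}\log N$. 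The same lower bound holds, with no separation of any kind to exploit.
\end{itemize}

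Your remedy does not close this. The convergence $\sup_\tau|E\cap\tau|\to0$ along eccentric sequences holds for each fixed $E$, but with no rate uniform in $E$. A supremum tending to zero also does not make an infinite sum converge, and the number of Whitney partners per tile is irrelevant to the sum over shapes. This is exactly why the bilinear-to-linear deductions of \cite{Va05, Le06} stayed off the scaling line.

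A smaller slip: $s=\min\{p,p'\}\le p$, so $s/p>1$ is false. What holds for $p<3$, and suffices within one shape, is $2s/p\ge1$.

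The missing idea is the refined argument of \cite{BShypparab}, which the paper invokes instead of a Whitney summation.
\begin{enumerate}
\item Decompose $E=\bigcup_k E^k$ by the length $\sim 2^k$ of the vertical fibers, which pins down the vertical scale.
\item For each $k$, \cite[Proposition~2.2]{BShypparab} splits $E^k=\bigcup_{\delta\le\eps_k}E^k_\delta$. Here $\|\scriptE\chi_F\|_q\le\delta|E^k|^{1/p}$ for all $F\subseteq E^k_\delta$, and $E^k_\delta$ is covered by $O(\delta^{-C})$ dyadic tiles of height $2^k$ and area at most $|E^k|$. The hypothesis at $p_0$ gives $\eps_k\lesssim1$.
\item Pieces with very different fiber lengths interact weakly: by \cite[Lemma~3.2]{BShypparab}, $\|\scriptE\chi_{E^{k}_\delta}\scriptE\chi_{E^{k'}_\delta}\|_{q/2}\lesssim 2^{-c_0|k-k'|}\max\{|E^k|,|E^{k'}|\}^{2/p}$ once $|k-k'|\ge A\log\delta^{-1}$.
\item Expanding into $N=\lceil q\rceil$ factors gives $\|\sum_{k\in\scriptK(\eps)}\scriptE\chi_{E^k_\delta}\|_q^q\lesssim(\log\delta^{-1})^{N+1}\sup_k\|\scriptE\chi_{E^k_\delta}\|_q^{q-p}|E|$.
\item The gain comes from taking a geometric mean of $\|\scriptE\chi_{E^k_\delta}\|_q\le\delta|E|^{1/p}$ and $\|\scriptE\chi_{E^k_\delta}\|_q\lesssim\sum_{\tau\in\scriptT^k_\delta}|E\cap\tau|^{1/p}\lesssim\delta^{-C}\sup_{|\tau|\le|E|}|E\cap\tau|^{1/p}$. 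This yields $\delta^{1/(C+2)}|E|^{\frac{C+1}{(C+2)p}}\sup_\tau|E\cap\tau|^{\frac{1}{(C+2)p}}$.
\item The leftover power of $\delta$ sums the dyadic series in $\delta$ and $\eps$, giving $\nu=\frac1{C+2}(1-\frac pq)$.
\end{enumerate}
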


\begin{proof}[Proof of Proposition \ref{prop 2.1}]
Fix $p \in (1,p_0)$, and let $\nu_1$ and $\nu_2$ denote the exponents $\nu$ from Lemmas \ref{L:range simplification} and \ref{L:significant rectangle}, respectively. Let $f \in L^p$ and let $E_j = \{2^{j-1} < |f| \leq 2^j\}$ for $j \in \Z$. Applying Lemmas \ref{L:range simplification} and \ref{L:significant rectangle} in sequence shows that
\begin{align*}
\|\scriptE f\|_q &\lesssim \sup_j \sup_E \sup_{\tau\colon |\tau| \leq |E\cap E_j|}2^{j\nu_1}|\tau \cap E \cap E_j|^\frac{\nu_1\nu_2}{p}|E \cap E_j|^\frac{\nu_1(1-\nu_2)}{p}\|f\|_p^{1-\nu_1}\\
&\leq \sup_j \sup_{\tau\colon |\tau| \leq |E_j|}2^{j\nu_1}|\tau \cap E_j|^\frac{\nu_1\nu_2}{p}|E_j|^\frac{\nu_1(1-\nu_2)}{p}\|f\|_p^{1-\nu_1}.
\end{align*}
Because $|f| \sim 2^j$ on $E_j$, the right-hand side above is at most
\begin{align*}
\sup_j \sup_{\tau\colon |\tau|\leq|E_j|}\|f\chi_{\tau \cap E_j}\|_p^{\nu_1\nu_2}\|f\|_p^{1-\nu_1\nu_2}.
\end{align*}
Fix $j_0$ and $\tau_0$ such that $|\tau_0| \leq |E_{j_0}|$. It suffices to show that
\begin{align*}
\|f\chi_{\tau_0\cap E_{j_0}}\|_p \lesssim \sup_\tau\|f\chi_{\tau\cap\{|f|\leq|\tau|^{-\frac{1}{p}}\|f\|_p\}}\|_p.
\end{align*}
Since $\|f\|_p^p \sim \sum_j 2^{jp}|E_j|$, it follows that
\begin{align*}
E_{j_0} \subseteq \{|f|\leq 2^{j_0}\} \subseteq \{|f|\leq C|E_{j_0}|^{-\frac{1}{p}}\|f\|_p\} \subseteq \{|f|\leq C|\tau_0|^{-\frac{1}{p}}\|f\|_p\}
\end{align*}
for some constant $C$. We can cover $\tau_0$ by at most $2C^p$ dyadic tiles $\tau_0^i$ with measure $|\tau_0^i| \leq C^{-p}|\tau_0|$. Thus
\begin{align*}
\|f\chi_{\tau_0\cap E_{j_0}}\|_p \leq \|f\chi_{\tau_0 \cap \{|f|\leq C|\tau_0|^{-\frac{1}{p}}\|f\|_p\}}\|_p &\lesssim\sup_i\|f\chi_{\tau_0^i \cap \{|f| \leq |\tau_0^i|^{-\frac{1}{p}}\|f\|_p\}}\|_p\\
&\leq\sup_\tau\|f\chi_{\tau\cap\{|f|\leq|\tau|^{-\frac{1}{p}}\|f\|_p\}}\|_p,
\end{align*}
as we needed to show.
\end{proof}

Lemma \ref{L:range simplification} will follow from a sharpened form of the Hunt--Marcinkiewicz interpolation theorem \cite{Hunt}, which we state as Lemma \ref{L:general range simplification} below. Our argument takes as inspiration a wonderful proof of Hunt--Marcinkiewicz that we learned about from lecture notes of Rowan Killip \cite{Killip}.  

Let $f$ be an arbitrary, complex-valued, measurable function.  After decomposing $f$ into its real and imaginary parts and then each of those into their positive and negative components, we expand the values of $f$ in binary; thus
\begin{equation} \label{E:binary}
f = \sum_j 2^j \chi_{F_j^1} - \sum_j 2^j\chi_{F_j^2} + i[\sum_j 2^j \chi_{F_j^3} - \sum_j 2^j \chi_{F_j^4}],
\end{equation}
with the $F^k_j$ measurable.  This expansion is uniquely determined.  To keep notation simple, we will suppress the dependence of the $F^k_j$ on $f$.  

We denote by $\|f\|_{L^{p,u}}$ the usual Lorentz quasi-norms (or, since we will always have $p>1$, their equivalent norms) and recall that for $u<\infty$,
$$
\|f\|_{L^{p,u}} \sim \bigl(\sum_{j,k}2^{ju}|F^k_j|^{\frac{u}{p}}\bigr)^{\frac1u}.
$$

\begin{lemma} \label{L:general range simplification}
Let $T$ be a sublinear operator, mapping characteristic functions of finite measure sets to measurable functions, and assume that $T$ obeys
\begin{equation} \label{E:gen rwt}
\int_F |T\chi_E(\boldsymbol x)|\, d\boldsymbol x \leq C_j |E|^{1/p_j}|F|^{1/q_j'}, \qquad j=0,1,
\end{equation}
for all bounded, measurable $E,F \subseteq \R^n$ and some $1 \leq p_j,q_j \leq \infty$, with $p_0 \neq p_1$ and $q_0 \neq q_1$.  For $0 \leq \theta \leq 1$, define $(p_\theta,q_\theta)$ in the usual way, by
$$
\tfrac1{p_\theta} := \tfrac{1-\theta}{p_0}+\tfrac\theta{p_1}, \qquad 
\tfrac1{q_\theta}:= \tfrac{1-\theta}{q_0}+\tfrac\theta{q_1}.
$$
Given a measurable $f$ with binary decomposition as in \eqref{E:binary}, for $1 \leq u < v \leq \infty$ and $0 < \theta < 1$, 
\begin{equation} \label{E:Lorentz bound}
\|Tf\|_{L^{q_\theta,v}} \lesssim \sup_{j,k} \|2^jT\chi_{F_j^k}\|_{L^{q_\theta,v}}^{1-\nu}\|f\|_{L^{p_\theta,u}}^\nu.
\end{equation}
Here $\nu$ depends on $\tfrac1u-\tfrac1v$, and 
the implicit constant depends on $C_0,C_1, \theta,u,v,$ and $p_0,p_1,q_0,q_1$.
\end{lemma}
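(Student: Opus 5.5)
The plan is to follow the dyadic-pairing proof of Hunt--Marcinkiewicz (as in \cite{Killip}) and split the resulting double sum into a near-diagonal part and an off-diagonal part. The off-diagonal part enjoys geometric decay. The near-diagonal part is controlled by the quantity $S:=\sup_{j,k}\|2^jT\chi_{F^k_j}\|_{L^{q_\theta,v}}$, using the strict gap $u<v$.

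\emph{Reduction.} By sublinearity and \eqref{E:binary} we have $|Tf|\le \sum_{k}\sum_j 2^j|T\chi_{F^k_j}|$. This needs a truncation/monotone convergence argument for the infinite sum; I would first prove everything for $f$ with finitely many nonzero binary digits and then pass to the limit. It suffices to treat one $k$, so write $F_j=F^k_j$. By duality for Lorentz spaces ($v>u\ge1$, $q_\theta>1$ in the nontrivial cases), it suffices to bound $\int |Tf|\,g$ for $g=\sum_m 2^m\chi_{G_m}\ge0$ with $\|g\|_{L^{q_\theta',v'}}\le1$, where $G_m$ are the binary level sets of $g$. Set $a_j:=2^j|F_j|^{1/p_\theta}$ and $b_m:=2^m|G_m|^{1/q_\theta'}$, so that $\|a\|_{\ell^u}\sim\|f\|_{L^{p_\theta,u}}$ and $\|b\|_{\ell^{v'}}\lesssim1$.

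\emph{Off-diagonal decay.} Take the minimum of the two hypotheses \eqref{E:gen rwt}, using $p_0\ne p_1$ and $q_0\ne q_1$. This gives
$$I_{j,m}:=2^{j+m}\int_{G_m}|T\chi_{F_j}|\lesssim a_jb_m\,2^{-c|\ell_j-\lambda_m|},$$
where $\ell_j,\lambda_m$ are suitable linear combinations of $\log|F_j|$ and $\log|G_m|$, and $c>0$ depends on $\theta$ and the exponents. Following Killip, I would reindex each $j$ and each $m$ by the integer part of this "logarithmic scale". Then $\sum_{|\ell_j-\lambda_m|>N}I_{j,m}$ becomes a Schur/Young-type sum with kernel $2^{-c|\cdot|}$, restricted to distance greater than $N$. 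Pairing $\ell^u$ against $\ell^{v'}$ (which is fine since $u\le v$) bounds it by
$$\lesssim 2^{-cN}\|a\|_{\ell^u}\|b\|_{\ell^{v'}}\lesssim 2^{-cN}\|f\|_{L^{p_\theta,u}}.$$
One must check that several $j$ (or $m$) sharing a scale do not destroy this. I would argue that sets sharing a scale have comparable measure, and then use the disjointness of the level structure, that is, that $a_j$ is controlled by the contributions within each scale class.

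\emph{Near-diagonal part.} For fixed $j$, let $g_j:=\sum_{m:|\ell_j-\lambda_m|\le N}2^m\chi_{G_m}$ and $\beta_j:=\|g_j\|_{L^{q_\theta',v'}}\sim(\sum_{m\text{ near }j}b_m^{v'})^{1/v'}$. There are two bounds. Lorentz duality gives $\sum_{m\text{ near}}I_{j,m}\le S\beta_j$, and the estimate above gives $\sum_{m\text{ near}}I_{j,m}\lesssim a_j\beta_j N^{1-1/v'}$. Interpolating the two, the near part is at most
$$N\,S^\nu\sum_j a_j^{1-\nu}\beta_j\le N\,S^\nu\|a^{1-\nu}\|_{\ell^{v}}\|\beta\|_{\ell^{v'}}.$$
Here $\|\beta\|_{\ell^{v'}}\lesssim N^{1/v'}$, since each $m$ is near $O(N)$ values of $j$ scale-wise. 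If we choose $\nu\le1-u/v$, then $(1-\nu)v\ge u$, so $\|a^{1-\nu}\|_{\ell^v}\le\|a\|_{\ell^u}^{1-\nu}$. Thus the near part is $\lesssim N^{C}S^\nu\|f\|_{L^{p_\theta,u}}^{1-\nu}$. This is exactly where $u<v$ is used, and why $\nu$ depends on $\frac1u-\frac1v$.

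\emph{Optimization.} Combining the two parts,
$$\|Tf\|_{L^{q_\theta,v}}\lesssim 2^{-cN}\|f\|_{L^{p_\theta,u}}+N^CS^\nu\|f\|_{L^{p_\theta,u}}^{1-\nu}.$$
We may assume $S\lesssim\|f\|_{L^{p_\theta,u}}$, since each $2^j\chi_{F_j}$ has $L^{p_\theta}$-type norm at most that of $f$ and the hypotheses give the strong Lorentz bound for characteristic functions. Choose $N\sim\log(\|f\|/S)$. The resulting logarithmic loss is absorbed by shrinking $\nu$ slightly, which yields \eqref{E:Lorentz bound}. (Note the statement places the exponent $1-\nu$ on the $S$ factor; this is just a renaming of $\nu$.)

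The main obstacle I expect is the bookkeeping in the off-diagonal step: making the Schur-type decay rigorous when many $j$ (or $m$) share a common logarithmic scale. This is precisely where Killip's proof is delicate, and it is what is needed to keep the counting factors merely polynomial in $N$.
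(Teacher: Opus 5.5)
Your near/far splitting argument can be made to work, but one step is false as written. In the near-diagonal part, $\beta_j$ depends on $j$ only through its scale class, and a single class may contain infinitely many $j$. For example, $f(\boldsymbol x)=x_1\chi_{[0,1]^n}(\boldsymbol x)$ has $|F_j|=\tfrac12$ for every $j<0$. Then $\|\beta\|_{\ell^{v'}}=\infty$, and the H\"older step $\sum_j a_j^{1-\nu}\beta_j\le\|a^{1-\nu}\|_{\ell^v}\|\beta\|_{\ell^{v'}}$ gives nothing. The claim that each $m$ is near $O(N)$ values of $j$ holds for classes, not for indices. The counting behind $\sum_{m\ \mathrm{near}\ j}I_{j,m}\lesssim N^{1/v}a_j\beta_j$ has the same defect, since infinitely many $m$ can be near $j$.

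The repair is the device you planned for the off-diagonal part. What matters is that $|F_j|$ is comparable within a class; the $F_j$ are binary-digit sets, not disjoint level sets. Within a class $a_j\approx 2^jc_s$, so $\sum_{j\in\mathrm{class}}a_j^{1-\nu}\lesssim_\nu\max_{j\in\mathrm{class}}a_j^{1-\nu}$, and similarly $\sum_{m\in\mathrm{class}}b_m\lesssim\max_{m\in\mathrm{class}}b_m$. Since $\sum_{j\in\mathrm{class}}\min\{S,a_j\}$ need not be geometric, you must interpolate for each $j$ \emph{before} summing, and you need $\nu<1$. With this grouping, H\"older over classes gives a near part $\lesssim N^{C}S^{\nu}\|f\|_{L^{p_\theta,u}}^{1-\nu}$ for $\nu\le 1-u/v$, and your optimization in $N$ goes through.

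The paper's route is different and shorter, because it never separates near from far. It writes $\sum_{j,j'}I_{j,j'}\le(\sup_{j,j'}I_{j,j'})^{1-\nu}\sum_{j,j'}I_{j,j'}^{\nu}$. Each \emph{single} $I_{j,j'}$ is bounded by $S$ via Lorentz H\"older against the one piece $2^{j'}\chi_{G_{j'}}$. It then proves $\sum I_{j,j'}^{\nu}\lesssim\|f\|_{L^{p_\theta,u}}^{\nu}$ from the restricted weak-type bounds alone, in three steps.
\begin{itemize}
\item It groups by $|F_j|\sim 2^a$, $|G_{j'}|\sim 2^{a'}$, using the same within-class geometric summation through $j(a)$.
\item The off-diagonal factor $2^{-\theta\nu k}$ is summable in $k$.
\item For fixed $k$, the bounded-multiplicity sum over $(a,a')$ is handled by H\"older in $\ell^{u/\nu}\times\ell^{v'/\nu}$. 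These exponents are conjugate exactly when $\nu=(1/u+1/v')^{-1}$, and $u<v$ is used only to make this $\nu<1$.
\end{itemize}
So there is no parameter $N$ and no logarithmic loss, and $\nu$ is explicit.

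Your route pairs $2^jT\chi_{F_j}$ against a whole near-diagonal block of $g$ rather than one $G_{j'}$ at a time. It pays for the split and the optimization, but it gives a larger admissible exponent on $S$: anything below $1-u/v$, versus the paper's $\frac{1/u-1/v}{1+1/u-1/v}$. No exponent above $1-u/v$ is possible, as $T=\mathrm{Id}$ shows. That gain is irrelevant for the application to $\mathcal{E}$, which only needs some exponent in $(0,1)$.
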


We note that concentration-compactness methods are best-developed for $L^p$-improving estimates, namely $L^p \to L^q$ inequalities with $q > p$, due to the role of convexity.  In the proof of this lemma, we will see that ``Lorentz-improving'' can also facilitate such estimates.  

\begin{proof}[Proof of Lemma~\ref{L:general range simplification}]
To simplify notation, we will omit the subscript $\theta$ from the exponents $p_\theta,q_\theta$.  It suffices to prove that for functions $f \in L^{p,u}$, $g \in L^{q',v'}$ with $\|f\|_{L^{p,u}} = \|g\|_{L^{q',v'}}=1$,
\begin{align*}\label{E:range simpl bilinear}
    \int |Tf \, g| \lesssim \sup_{j,k}\|2^j T\chi_{F^k_j}\|_{L^{q,v}}^{1-\nu},
\end{align*}
with $\nu:=(\tfrac1u +\tfrac1{v'})^{-1} < 1$.  

By the triangle inequality and sublinearity, we may assume that $f$ and $g$ are both nonnegative.  This assumption simplifies our binary decomposition, and we can write
$$
f = \sum_j 2^j\chi_{F_j}, \qquad g = \sum_{j'}2^{j'}\chi_{G_{j'}}.
$$
Thus,
\begin{align*}
\int |Tf \, g| 
&\leq \sum_{j,j'} 2^{j+j'}\int_{G_{j'}}|T\chi_{F_j}|\\
&\leq \bigl(\sup_{j,j'} 2^{j+j'}\int_{G_{j'}}|T\chi_{F_j}|\bigr)^{1-\nu} \sum_{j,j'} \bigl(2^{j+j'}\int_{G_{j'}}|T\chi_{F_j}|\bigr)^\nu.
\end{align*}
By H\"older's inequality and $\|g\|_{L^{q',v'}} = 1$,
$$
2^{j+j'} \int_{G_{j'}}|T\chi_{F_j}| \leq \|2^jT\chi_{F_j}\|_{L^{q,v}}\|2^{j'}\chi_{G_{j'}}\|_{L^{q',v'}} \leq \|2^jT\chi_{F_j}\|_{L^{q,v}}.
$$
Thus, by \eqref{E:gen rwt}, it remains to prove that
\begin{equation} \label{E:sum nu}
\sum_{j,j'} \bigl(2^{j+j'}\min_{i=0,1}\bigl\{|F_j|^{1/p_i}|G_{j'}|^{1/q_i'}\bigr\}\bigr)^\nu \lesssim 1.  
\end{equation}
We decompose
$$
\Z = \bigcup_{a \in \Z} \scriptJ(a) = \bigcup_{a' \in \Z} \scriptJ'(a'),
$$
where 
$$
\scriptJ(a) := \{j\colon |F_j| \sim 2^a\}, \qquad \scriptJ'(a'):=\{j'\colon |G_{j'}| \sim 2^{a'}\}.
$$
It will be convenient to denote $\scriptJ(a,a') := \scriptJ(a) \times \scriptJ'(a')$.  
We also define
$$
j(a):=\sup \scriptJ(a), \qquad j'(a'):=\sup \scriptJ'(a'),
$$
with the usual convention that the empty set has $-\infty$ as its supremum.  Thus, interpreting $2^{-\infty}$ as zero, 
$$
\|f\|_{p,u} \sim \bigl(\sum_a 2^{u(j(a)+a/p)}\bigr)^{1/u}, \qquad 
\|g\|_{q',v'} \sim \bigl(\sum_{a'} 2^{v'(j'(a')+a'/q')}\bigr)^{1/v'}.
$$

We return to \eqref{E:sum nu}, whose left hand side is comparable to
\begin{align}\notag
&\sum_{a,a'}\sum_{(j,j') \in \scriptJ(a,a')} \bigl(2^{j+j'}\min_{i=0,1} \bigl\{2^{a/p_i + a'/q_i'}\bigr\}\bigr)^\nu\\ \label{E:RHS-reverse-Hunt}
&\qquad \sim \sum_{i=0,1}{\sum_{a,a'}}^{(i)}  \bigl(2^{j(a)+j'(a')}2^{a/p_i + a'/q_i'}\bigr)^\nu,
\end{align}
where the decoration $(i)$ indicates that the sum is taken over those $a,a'$ for which the minimum in the first summand is attained with index $i$.  

As our setup is completely symmetric in the indices, it suffices to bound the sum with minimizer $i=0$.  Basic arithmetic yields
\begin{align*}
&j(a)+j'(a')+a/p_0+a'/q_0' \\&\qquad
= (j(a)+a/p)+(j'(a')+a'/q') + \theta [a(1/p_0-1/p_1)+a'(1/q_0'-1/q_1')].
\end{align*}
Thus, the minimizer is $i=0$ precisely when 
$$
-k(a,a'):=a(1/p_0-1/p_1)+a'(1/q_0'-1/q_1') \leq 0.
$$
We reorganize the right hand side of \eqref{E:RHS-reverse-Hunt} based on the size of this quantity $k(a,a')$.  Namely, because $p_0 \neq p_1$ and $q_0 \neq q_1$, given $a,k \in \Z$, there are a bounded number of $a'$ with $|k(a,a')-k| < 1$, and likewise with the roles of $a,a'$ reversed.
Therefore, the right hand side of \eqref{E:RHS-reverse-Hunt} is bounded by a constant times
\begin{equation}
\begin{aligned}
&\sum_{k \geq 0} 
\sum_{(a,a'):|k-k(a,a')| < 1}2^{-\theta\nu k(a,a')}2^{\nu(j(a)+j'(a')+a/p+a'/q)}
\\
&\qquad \lesssim
\sum_{k \geq 0} 
2^{-\theta\nu k}
\bigl(\sum_a 2^{u(j(a)+a/p)}\bigr)^{\nu/u}
\bigl(\sum_{a'}
2^{v'(j'(a')+a'/q')}\bigr)^{\nu/v'}
\lesssim \|f\|_{p,u}^\nu \|g\|_{q',v'}^{\nu} \lesssim 1.
\end{aligned}
\end{equation}
Here we recall that $\nu$ was selected so that $\nu/u+\nu/v' = 1$.  We are using H\"older's inequality, since the sum over $(a,a')$ such that $|k(a,a')-k| < 1$ can be expressed as an iterated sum first over $a$, and then over a bounded number of $a'$ for each $a$, or likewise, with the roles of $a,a'$ reversed.  
\end{proof}

Next we show how Lemma \ref{L:range simplification} follows from Lemma \ref{L:general range simplification}.
\begin{proof}[Proof of Lemma \ref{L:range simplification}]
We will apply Lemma \ref{L:general range simplification} with $T = \scriptE$, $p_0 = p_0$, $q_0 = 2p_0'$, $p_1 = 1$, $q_1 = \infty$. Note that $\scriptE$ extends to a bounded linear operator from $L^{p_j}$ to $L^{q_j}$ for $j=0,1$ and therefore satisfies the restricted weak-type conditions \eqref{E:gen rwt}. Fix $p \in (1,p_0)$ and $f \in L^p$ with binary decomposition as in \eqref{E:binary}. We have $p = p_\theta$ for some $\theta \in (0,1)$. Therefore, \eqref{E:Lorentz bound} (with $u=p<q=v$) implies that
\begin{align*}
\|\scriptE f\|_q \lesssim \sup_{j,k}\|2^j\scriptE\chi_{F_j^k}\|_q^{1-\nu}\|f\|_p^\nu.
\end{align*}
Fix $j_0$ and $k_0$, and let $E_j = \{2^{j-1}<|f|\leq 2^j\}$. It suffices to show that
\begin{align*}
\|2^{j_0}\scriptE\chi_{F_{j_0}^{k_0}}\|_q \lesssim \sup_j\sup_E\|2^j\scriptE\chi_{E\cap E_j}\|_q.
\end{align*}
Observe that $F_{j_0}^{k_0} \subseteq \bigcup_{j\geq j_0} E_j$. Since the $E_j$ are pairwise disjoint, it follows that
\begin{align*}
\|2^{j_0}\scriptE\chi_{F_{j_0}^{k_0}}\|_q = \bigg\|\sum_{j\geq j_0}2^{j_0}\scriptE\chi_{F_{j_0}^{k_0}\cap E_j}\bigg\|_q &\leq \sum_{\ell \geq 0}2^{-\ell}\|2^{j_0+\ell}\scriptE\chi_{F_{j_0}^{k_0}\cap E_{j_0+\ell}}\|_q\\
&\lesssim \sup_j\sup_E\|2^j\scriptE\chi_{E\cap E_j}\|_q,
\end{align*}
as required.
\end{proof}

Next we prove Lemma \ref{L:significant rectangle} which states that, for a low-entropy set $E$, ``most'' of the norm of the extension arises from ``pieces'' that are well-adapted to ``few'' dyadic tiles.

\begin{proof}[Proof of Lemma~\ref{L:significant rectangle}]
Fix a measurable set $E \subseteq \R^2$ and decompose it (up to a set of measure zero) as $E = \bigcup_k E^k$, where
$$
E^k:=\{(\xi_1,\xi_2) \in E : 2^{k-1} < |\{\xi_2':(\xi_1,\xi_2') \in E\}| \leq 2^k\}.
$$
For each $k$ such that $|E_k| \neq 0$ (the $k$'s with $|E_k|=0$ will play no role in the argument), define $\eps_k$ to be the smallest dyadic number such that 
$$
\|\scriptE \chi_F\|_{L^q} \leq \eps_k|E^k|^{\frac1p}, \qtq{for every measurable} F \subseteq E^k,
$$
and set $\scriptK(\eps) :=\{k\colon \eps_k = \eps\}$, for dyadic $\eps>0$.  By \cite[Proposition~2.2]{BShypparab}, we may decompose $E^k$ as $E^k = \bigcup_{0 < \delta \leq \eps_k} E^k_\delta$, where
\begin{equation}\label{E:E^k_delta}
    \|\scriptE\chi_F\|_q \leq \delta|E^k|^{\frac1p}, \qtq{for every measurable} F \subseteq E^k_\delta,
\end{equation}
and $E^k_\delta \subseteq \bigcup_{\tau \in \scriptT_\delta^k} \tau$, where $\scriptT^k_\delta$ is a collection of $O(\delta^{-C})$ dyadic tiles of height (in the $\xi_2$ direction) $2^k$ and measure bounded above by $|E^k|$. 

Following the proof of  \cite[Theorem~1.1]{BShypparab} (namely, Section~3), we have by the triangle inequality that
\begin{equation}\label{E:decomp eps delta sum}
    \|\scriptE\chi_E\|_{q} \leq \sum_{0 < \eps \lesssim 1} \sum_{0 < \delta \leq \eps} \Big\|\sum_{k \in \scriptK(\eps)} \scriptE\chi_{E_\delta^k}\Big\|_{q},
\end{equation}
with sums taken over dyadic values of $\eps$ and $\delta$. 

Set $N := \lceil q \rceil$, and let $A$ be a large constant. By the triangle inequality and concavity ($q \leq N$), we see that
\begin{equation}\label{E:triang ineq multilin}
\begin{aligned}
    \Big\|\sum_{k \in \scriptK(\eps)} \scriptE \chi_{E_\delta^k}\Big\|_{q}^q &\leq \sum_{{\bf k}\in\scriptK(\eps)^N}\Big\|\prod_{i=1}^N\fE\chi_{E_\delta^{k_i}}\Big\|_{q/N}^{q/N} \lesssim \sum_{\substack{{\bf k}\in\scriptK(\eps)^N:\\k_1\leq\cdots\leq k_N}}\Big\|\prod_{i=1}^N\fE\chi_{E_\delta^{k_i}}\Big\|_{q/N}^{q/N}\\
    &\lesssim (\log\delta^{-1})^{N+1} \sum_{k \in \scriptK(\eps)}\|\scriptE\chi_{E_\delta^k}\|_{q}^q + \sum_{\substack{{\bf k}\in\scriptK(\eps)^N:\\k_1\leq\cdots\leq k_N,\\|k_1-k_N|\geq A\log\delta^{-1}}}\Big\|\prod_{i=1}^N \scriptE \chi_{E_\delta^{k_i}}\Big\|_{q/N}^{q/N}.
    \end{aligned}
\end{equation}
If $A$ is sufficiently large, then by \cite[Lemma~3.2]{BShypparab},
$$
\|\scriptE\chi_{E_\delta^{k_1}} \scriptE\chi_{E_\delta^{k_N}}\|_{q/2} \lesssim 2^{-c_0|k_1-k_N|}\max\{|E^{k_1}|^{\frac2p}, |E^{k_N}|^{\frac2p}\}.
$$
Therefore by H\"older's inequality and elementary combinatorics,
\begin{align*}
    &\sum_{\substack{{\bf k}\in\scriptK(\eps)^N:\\k_1\leq\cdots\leq k_N,\\|k_1-k_N|\geq A\log\delta^{-1}}}\Big\|\prod_{i=1}^N \scriptE \chi_{E_\delta^{k_i}}\Big\|_{q/N}^{q/N} \\
    &\qquad \lesssim \sup_{k\in\scriptK(\eps)}\|\scriptE\chi_{E_\delta^k}\|_q^{q-p}\sum_{\substack{{\bf k}\in\scriptK(\eps)^N:\\k_1\leq\cdots\leq k_N,\\|k_1-k_N|\geq A\log\delta^{-1}}} 2^{-\frac{c_0p}{N}|k_1-k_N|}\max_i|E^{k_i}|\\
    &\qquad \lesssim \sup_{k\in\scriptK(\eps)}\|\scriptE\chi_{E_\delta^k}\|_q^{q-p} \sum_{k \in \scriptK(\eps)} \sum_{l\geq0} 2^{-\frac{c_0p}{N}l}l^N\max_{k':|k-k'|\leq l}|E^{k'}|\\
    &\qquad \lesssim
    \sup_{k\in\scriptK(\eps)}\|\scriptE\chi_{E_\delta^k}\|_q^{q-p}\sum_{l\geq0}2^{-\frac{c_0p}{N}l}l^{N+1}\sum_{k\in\scriptK(\eps)}|E^k|\\
    &\qquad \lesssim
    \sup_{k\in\scriptK(\eps)}\|\scriptE\chi_{E_\delta^k}\|_q^{q-p}|E|. 
\end{align*}
Returning to \eqref{E:triang ineq multilin},
\begin{equation}\label{E:k eps delta sum}
    \Big\|\sum_{k \in \scriptK(\eps)}\scriptE\chi_{E_\delta^k}\Big\|_q^q \lesssim (\log \delta^{-1})^{N+1} \sup_{k \in \scriptK(\eps)}\|\scriptE \chi_{E_\delta^k}\|_q^{q-p}|E|.
\end{equation}
By the triangle inequality and \eqref{E:E^k_delta},
\begin{align*}
    \|\scriptE \chi_{E_\delta^k}\|_q &\lesssim \|\scriptE \chi_{E_\delta^k}\|_q^{\frac{C+1}{C+2}}\Big(\sum_{\tau \in \scriptT_\delta^k}|E_\delta^k \cap \tau|^\frac{1}{p}\Big)^\frac{1}{C+2}\\
    &\lesssim \delta^{\frac{C+1}{C+2}} |E^k|^\frac{C+1}{(C+2)p} \delta^{-\frac{C}{C+2}}\sup_{\tau \in \scriptT^k_\delta} |E_\delta^k \cap \tau|^\frac{1}{(C+2)p}\\
    &\leq \delta^{\frac1{C+2}}|E|^\frac{C+1}{(C+2)p} \sup_{\tau \in \scriptT_\delta^k}|E\cap\tau|^\frac{1}{(C+2)p}.
\end{align*}
For $\tau \in \scriptT_\delta^k$, we have $|\tau| \leq |E^k| \leq |E|$.  Thus inserting the above bound on $\|\scriptE\chi_{E_\delta^k}\|_q$ into \eqref{E:decomp eps delta sum}, we get
$$
\|\scriptE\chi_E\|_q \lesssim \sum_{0 < \eps \lesssim 1} \sum_{0 < \delta \leq \eps} \delta^\theta \sup_{|\tau| \leq |E|} |E\cap\tau|^\frac{\nu}{p}|E|^\frac{1-\nu}{p},
$$
for sufficiently small $\theta > 0$ and $\nu :=\frac{1}{C+2}(1-\frac{p}{q})$.  The lemma follows by summing the geometric series in $\delta$ and $\varepsilon$.
\end{proof}

Now we are ready to state and prove the main result of this section.

\begin{proposition}\label{truncation}
There exists a sequence $\rho_j \searrow 0$ such that for every $f \in L^p$ with $\|f\|_p = 1$, there exists a sequence $\tau^j$ of dyadic tiles such that if
\begin{align*}
    r^0 := f, \qquad    g^j := r^{j-1}\chi_{\tau^j \cap \{|f|\leq|\tau^j|^{-1/p}\}},\qquad 
    r^j := r^{j-1}-g^j,
\end{align*}
then $\|\scriptE h^j\|_q \leq \rho_j$ for all $j$ and $h^j := r^j\chi$ with $\chi$ any measurable characteristic function.
\end{proposition}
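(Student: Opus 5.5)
The plan is a greedy extraction driven by Proposition~\ref{prop 2.1}, together with an energy-decrement argument that bounds how many large tiles can be extracted. The tiles $\tau^j$ are chosen one at a time, each time almost maximizing the quantity on the right side of \eqref{prop 2.1 est}.

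First I would record two structural facts. Each $g^j$ is the restriction of $r^{j-1}$ to a set, so $r^j = r^{j-1}\chi_{(\tau^j\cap\{|f|\le|\tau^j|^{-1/p}\})^c}$. By induction, $r^j = f\chi_{A_j}$ for a decreasing sequence of sets $A_j$. The pieces $g^j=f\chi_{A_{j-1}\setminus A_j}$ have disjoint supports, so
\begin{equation*}
\sum_j\|g^j\|_p^p\le\|f\|_p^p=1.
\end{equation*}
Moreover, any $h^j=r^j\chi$ is again of the form $f\chi_B$ with $B\subseteq A_j$, and $\|h^j\|_p\le1$.

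Given $r^{j-1}$, define
\begin{equation*}
\sigma_{j-1}:=\sup_{\chi}\sup_{\tau\in\scriptD}\|r^{j-1}\chi\,\chi_{\tau\cap\{|f|\le|\tau|^{-1/p}\}}\|_p ,
\end{equation*}
where $\chi$ ranges over measurable characteristic functions. Since $\chi$ only shrinks the support, the supremum over $\chi$ is attained at $\chi\equiv1$. Choose $\tau^j$ so that $\|g^j\|_p\ge\frac12\sigma_{j-1}$.

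The key step is to apply Proposition~\ref{prop 2.1} to $h=h^j=f\chi_B$. The truncation level in that proposition is $|\tau|^{-1/p}\|h\|_p$, with the norm of $h$ rather than of $f$. Because $\|h\|_p\le1$ and $|h|=|f|$ on $B$, we have
\begin{equation*}
\{|h|\le|\tau|^{-1/p}\|h\|_p\}\cap B\subseteq\{|f|\le|\tau|^{-1/p}\}.
\end{equation*}
Hence
\begin{equation*}
\|\scriptE h^j\|_q\lesssim \sup_\tau\|r^j\chi_{\tau\cap\{|f|\le|\tau|^{-1/p}\}}\|_p^\nu=\sigma_j^\nu .
\end{equation*}
This inclusion is what makes the truncation in terms of $f$ (with $\|f\|_p=1$) compatible with the proposition, and it must be checked carefully. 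If $\|h\|_p=0$ the bound is trivial.

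The remaining step, which I expect to be the main obstacle, is to show that $\sigma_j\le\rho'_j$ for some universal $\rho'_j\searrow0$. By monotonicity of $A_j$, the sequence $\sigma_j$ is nonincreasing. Moreover, $\sigma_{i-1}\le2\|g^i\|_p$ for every $i$. Therefore
\begin{equation*}
j\,\sigma_{j}^p\le\sum_{i\le j}\sigma_{i-1}^p\le 2^p\sum_i\|g^i\|_p^p\le 2^p ,
\end{equation*}
so $\sigma_j\le 2j^{-1/p}$. Setting $\rho_j:=Cj^{-\nu/p}$ for $j\ge1$, and $\rho_0:=\mathbf A_p$ to cover $j=0$ trivially, gives the claim uniformly in $f$.

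If the supremum defining $\sigma_{j-1}$ is zero, we simply take $g^j=0$, for example by choosing a tile disjoint from the support, and the bound persists. The only delicate points are the monotonicity of $\sigma_j$, which holds because the sets $A_j$ shrink, and the near-maximality of $\tau^j$, which is possible since the supremum is finite (it is at most $1$).
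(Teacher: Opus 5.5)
Your proposal is correct and follows the paper's proof step for step. You make a greedy selection of tiles, combine the monotonicity of the extracted norms with the disjointness of the $g^j$ to get the $j^{-1/p}$ decay, and apply Proposition~\ref{prop 2.1} to $h^j$, using $\|h^j\|_p\le 1$ and $|h^j|=|f|$ on its support. The only difference is that you choose a tile achieving half the supremum rather than an exact maximizer; this costs a harmless factor of $2$ and lets you skip the paper's dominated-convergence argument that a maximizing tile exists.
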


\begin{proof}
We claim that, given $r^{j-1}$, we can select $\tau^j \in \scriptD$ so that
\begin{align}\label{maximal property}
    \|g^j\|_p = \max_{\tau \in \scriptD}\|r^{j-1}\chi_{\tau \cap \{|f| \leq |\tau|^{-1/p}\}}\|_p,
\end{align}
in the notation of the proposition. Indeed, routine applications of the dominated convergence theorem show that $\|r^{j-1}\chi_{\tau \cap \{|f| \leq |\tau|^{-1/p}\}}\|_p \rightarrow 0$ when $|\tau| \rightarrow 0$ or $|\tau|\rightarrow \infty$ or $\dist(\tau,0)\rightarrow\infty$, while a simple truncation argument also implies convergence to zero when $|\tau| \sim 1$ and $\diam(\tau) \rightarrow \infty$.  Since the set of tiles $\tau$ satisfying $|\tau|\sim 1$, $\dist(\tau,0) \lesssim 1$, and $\diam(\tau)\lesssim 1$ is finite,  the claim follows.  

Using this construction (namely, maximality of $\tau^j$), $\|g^j\|_p$ is decreasing in $j$.  Moreover, by disjointness of the supports of the $g_j$, we have $\sum_j \|g_j\|_p^p \leq \|f\|_p^p = 1$.  Therefore $\|g_j\|_p \leq j^{-1/p}$.  Now, let $h^j := r^{j}\chi$.  From Proposition \ref{prop 2.1} we have
\begin{align*}
    \|\scriptE h^j\|_q &\lesssim \sup_{\tau \in \scriptD}\|h^j\chi_{\tau \cap \{|h^j|\leq|\tau|^{-1/p}\|h^j\|_p\}}\|_p^\nu\\
    &\leq \sup_{\tau \in \scriptD}\|r^j\chi_{\tau \cap \{|f|\leq|\tau|^{-1/p}\}}\|_p^\nu = \|g^{j+1}\|_p^\nu \leq j^{-\frac{\nu}{p}},
\end{align*}
where we are using that $h^j=r^j=f$ wherever $h^j \neq 0$ and that $\|h^j\|_p \leq \|f\|_p = 1$.  Thus, the proposition holds with $\rho_j := Cj^{-\nu/p}$ for some constant $C$.
\end{proof}

\section{Frequency localization}\label{sec_freqloc}
The main result of this section is Proposition \ref{freq loc}, which states that a near-maximizer is well adapted to a single tile. 

\begin{proposition}\label{freq loc}
There exist functions $\delta\colon (0,1] \to (0,\frac{1}{2}{\bf A}_p]$,  $R\colon (0,1] \to [1,\infty)$, and $S\colon L^p(\R^2) \to \scriptS_p^+$ such that for every $\eps \in (0,1]$ and $f \in L^p$, if $\|\scriptE f\|_q \geq ({\bf A}_p-\delta_\eps)\|f\|_p$, then $\|(S_f)f\|_{L^p(\{|\boldsymbol\xi|\leq R_\eps,|(S_f)f|\leq R_\eps\|f\|_p\})}\geq (1-\eps)\|f\|_p$.
\end{proposition}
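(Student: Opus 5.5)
We argue by contradiction, using the decomposition of Proposition \ref{truncation} together with a strict superadditivity of $t\mapsto t^{q/p}$ (note $q=2p'>p$ because $p<3$). By homogeneity take $\|f\|_p=1$. Suppose the claim fails for some $\eps$. Then there is a sequence $f_n$ with $\|f_n\|_p=1$ and $\|\scriptE f_n\|_q\to{\bf A}_p$ such that for every $S\in\scriptS_p^+$ and every $R$, $Sf_n$ has at least $\eps$ of its mass (in $L^p$) outside $\{|\boldsymbol\xi|\le R,\ |Sf_n|\le R\}$ for $n$ large. Apply Proposition \ref{truncation} to each $f_n$ to obtain tiles $\tau^j_n$ and pieces $g^j_n$ with $f_n=\sum_{j\le J}g^j_n+r^J_n$ and $\|\scriptE r^J_n\|_q\le\rho_J$. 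Fix $J$ large (depending on $\eps$) so that $\rho_J\ll\eps$.

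\emph{Normalizing the tiles.} For each $j$ we choose $S^j_n\in\scriptS_p^+$ (a dilation and a frequency translation) sending $\tau^j_n$ to $[0,1)^2$. Because $|f_n|\le|\tau^j_n|^{-1/p}$ on $\supp g^j_n$ and the dilations are $L^p$-normalized, $S^j_ng^j_n$ is supported in the unit square and bounded by $1$. After passing to a subsequence we may assume that, for each pair $j\ne j'$, the symmetries $(S^j_n)^{-1}S^{j'}_n$ either converge in $\scriptS_p^+$ or diverge. In the convergent case the two tiles are comparable, and we merge them into a single tile of bounded size, bounded eccentricity and bounded offset. In this way we group the pieces into clusters indexed by $\ell$: within a cluster everything lives, after one symmetry $S^\ell_n$, in a ball of radius $R(J)$ with height bounded by $R(J)$, while distinct clusters are mutually divergent.

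\emph{Decoupling.} The key claim is that for mutually divergent clusters $G^\ell_n=\sum_{j\in\ell}g^j_n$,
\[
\Bigl\|\sum_\ell \scriptE G^\ell_n\Bigr\|_q^q=\sum_\ell\|\scriptE G^\ell_n\|_q^q+o(1).
\]
We prove it by expanding the $q$-th power. Cross terms $\int|\scriptE G^\ell_n|^{q-1}|\scriptE G^{\ell'}_n|$ tend to zero: we change variables by $S^\ell_n$, which turns $G^\ell_n$ into a fixed compact-support bounded family (precompact in $L^2$ after using $L^p$ weak limits) and turns the other cluster into one acted on by a symmetry diverging to infinity. Divergence in dilation or translation makes $\scriptE$ of the second cluster converge weakly to $0$, with the $L^q$ mass escaping every compact set. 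Since $\scriptE$ of a bounded, compactly supported function has decaying tails in $L^q$, the product tends to zero.

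This is the step I expect to be hardest. Frequency translations act on $\Sigma$ by a shear, $(t,x)\mapsto(t,x_1+t\xi^0_2,x_2+t\xi^0_1)$, so divergence of tiles does not automatically give spatial separation of their extensions. Tiles that are separated in only one coordinate direction are exactly where the bilinear theory for negatively curved surfaces is weak. For these pairs I would first try the bilinear estimate \cite[Lemma~3.2]{BShypparab}, applied after rescaling, which gives decay $2^{-c_0|k-k'|}$ when the heights differ. For same-height tiles that are translated only horizontally, I would fall back on a local-smoothing/dispersive argument: for compactly supported smooth data, $\scriptE$ decays like $|t|^{-1}$ away from a bounded set, so the overlap region carries vanishing $L^q$ mass.

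\emph{Conclusion.} Granting the decoupling, set $a_\ell=\|G^\ell_n\|_p^p$. Then $\sum_\ell a_\ell\le1$ and
\[
({\bf A}_p-o(1))^q\le\sum_\ell {\bf A}_p^q a_\ell^{q/p}+O(\rho_J).
\]
Since $q/p>1$, this forces one cluster to satisfy $a_{\ell_0}\ge1-\eps^p/2$ once $J$ is large and $\delta$ is small. That single cluster is supported, after the symmetry $S_{f_n}:=S^{\ell_0}_n$, in $\{|\boldsymbol\xi|\le R\}$ with height at most $R$. This contradicts the choice of $f_n$. Finally, we define $\delta_\eps$ and $R_\eps$ from the contradiction argument, and choose $S_f$ arbitrarily when the hypothesis is vacuous.
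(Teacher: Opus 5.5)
Your architecture matches the paper's: contradiction, the decomposition of Proposition~\ref{truncation}, normalized tiles, a split into mutually comparable versus divergent pieces, decoupling of $\|\scriptE\cdot\|_q^q$, and the gain from $q>p$. Your final superadditivity step is fine. The gap is the decoupling itself, which you flag as the hardest step but support only with a compactness heuristic that does not work.

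After applying $S^\ell_n$, the family $S^\ell_nG^\ell_n$ is bounded and supported in a fixed ball, but it is only weakly precompact, not precompact. Moreover, $\scriptS_p^+$ contains no modulations. A modulation changes neither the support nor $|f|$, which is all Proposition~\ref{truncation} controls, yet it translates $\scriptE f$ arbitrarily in spacetime. So nothing pins down where $\scriptE(S^\ell_nG^\ell_n)$ lives, and these extensions have no uniformly decaying tails. The other cluster carries its own arbitrary modulation, so its extension can sit exactly where the first one does.

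Weak-convergence and escape-of-mass arguments control only the interaction with a \emph{fixed} function. Escape of mass is not even true in general: if the relative dilation spreads the second cluster over a region of diverging area, its extension concentrates at small spacetime scales and may stay in a fixed compact set. Your fallbacks have the same defect. Dispersive decay for smooth compactly supported data presupposes smoothness and a known spacetime location, which the pieces lack. And \cite[Lemma~3.2]{BShypparab} is an estimate for the specific sets $E^k_\delta$ in the proof of Lemma~\ref{L:significant rectangle}, not a bilinear bound for arbitrary pieces on tiles of different heights.

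What is needed is $\|\scriptE g_n\,\scriptE b_n\|_{q/2}\to0$ uniformly over all $g_n,b_n$ obeying only the constraints from Proposition~\ref{truncation}: $|g_n|\le1$ on $[0,1)^2$ and $|b_n|\le|\tau_n|^{-1/p}$ on $\tau_n$. Such bounds are automatically blind to modulations. The paper proves this in three cases.
\begin{itemize}
\item If $|\tau_n|\to0$ or $\infty$, H\"older with exponents $q_\pm$ on either side of $q$ and the size bound give $\|b_n\|_{p_\pm}\lesssim|\tau_n|^{\frac1{p_\pm}-\frac1p}\to0$ for the appropriate sign. No bilinear theory is needed.
\item If the areas agree but the eccentricity diverges, one decomposes $Q$ dyadically by vertical distance to $\tau_n$, and $\tau_n$ by horizontal distance to $Q$. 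The pieces nearest the crossing are small in $L^p$ by the size bounds. The remaining pairs are separated in \emph{both} coordinate directions, so the Lee--Vargas bilinear estimate \cite{Le06,Va05}, with its explicit dependence on the two separations, sums to $o(1)$.
\item If the shapes agree and one tile is translated to infinity (your one-direction case), a Tao--Vargas--Vega Whitney decomposition \cite{TVV98} of $\tilde Q\times\tilde\tau$ into pairs of $2^K\times2^m$ tiles, separated by $2^K$ horizontally and $2^m$ vertically, gives a bound $\lesssim 2^{K(2-\frac4q-\frac2r)}\to0$, since $r<(q/2)'$.
\end{itemize}

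Separately, the proposition requires $S$ to be independent of $\eps$, but your $S_f=S^{\ell_0}_n$ depends on $J=J(\eps)$. The paper avoids this by letting $S_f$ normalize the maximal tile $\tau^1$ and using Proposition~\ref{prop 2.1} to get $\|g^1\|_p\gtrsim1$ for near-maximizers. In your setup, that lower bound would force the dominant cluster to be the one containing $\tau^1$ once $\eps$ is small.
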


\begin{proof}
We begin by defining the mapping $S\colon L^p \rightarrow \mathcal{S}_p^+$. Fix $f \in L^p$ with $\|f\|_p = 1$, and recall the inductive procedure introduced in Proposition \ref{truncation}. There exists a symmetry $S_f \in \mathcal{S}_p^+$ such that if we replace $f$ by $(S_f)f$ in that procedure, then the tile $\tau^1$ becomes the unit square. We set $S(f) := S_f$. For unnormalized $0\not \equiv f \in L^p$, we define $S(f):= S(\frac{f}{\|f\|_p})$, and we set $S(0)$ to equal the identity.  

The proposition is now implied by the statement that for every $\varepsilon \in (0,1]$, there exist $\delta \in (0,\frac{1}{2}{\bf A}_p]$ and $R \in [1,\infty)$ such that if $f \in L^p$ and $\|\fE f\|_q \geq ({\bf A}_p-\delta)\|f\|_p$ then $$\|(S_f)f\|_{L^p(\{|\boldsymbol\xi|\leq R,|(S_f)f|\leq R\|f\|_p\})}\geq (1-\eps)\|f\|_p.$$ Suppose this statement is false, for contradiction. Then there exists an $\varepsilon > 0$ such that for any choice of $\delta$ and $R$, there exists $f \in L^p$ with $\|f\|_p = 1$ and $\|\fE f\|_q \geq {\bf A}_p - \delta$ but
\begin{align*}
\|(S_f)f\|_{L^p(\{|\boldsymbol\xi|\geq R\} \cup \{|(S_f)f|\geq R\})} \geq \varepsilon.
\end{align*}
In particular, taking $\delta := 1/n$ and $R := n$ for each $n \in \N$, we get a sequence of functions $\{f_n\} \subset L^p$ with $\|f_n\|_p = 1$ and $\|\fE f_n\|_q \rightarrow {\bf A}_p$ but
\begin{align}\label{contradiction assumption}
\|S_nf_n\|_{L^p(\{|\boldsymbol\xi|\geq n\} \cup \{|S_nf_n|\geq n\})} \geq \varepsilon,
\end{align}
where $S_n := S_{f_n}$ (as defined above).

Now, let $\rho$ be a small positive number depending only on $\varepsilon$, to be determined momentarily.  By Proposition \ref{truncation}, there exists $J \in \N$ such that for every $f \in L^p$ with $\|f\|_p = 1$, there exist dyadic tiles $\tau^j$, $1 \leq j \leq J$, with the following property: If we inductively define
\begin{align}\label{inductive construction}
   r^0 := f, \quad\quad g^j := r^{j-1}\chi_{\tau^j \cap \{|f|\leq|\tau^j|^{-\frac{1}{p}}\}}, \quad\quad r^j := r^{j-1}-g^j, \quad\quad 1 \leq j \leq J, 
\end{align}
then for all functions $h^J$ of the form $h^J = r^J\chi$ with $\chi$ a measurable characteristic function, we have
\begin{align*}
\|\fE h^J\|_q < \rho.
\end{align*}
Applying the construction \eqref{inductive construction} to $S_nf_n$ gives the following:  For each $n \in \N$, there exist dyadic tiles $\tau_n^j$, $1 \leq j \leq J$, such that if
\begin{align*}
r_n^0 := S_nf_n, \quad\quad g_n^j := r_n^{j-1}\chi_{\tau_n^j \cap \{|S_nf_n|\leq|\tau_n^j|^{-\frac{1}{p}}\}}, \quad\quad r_n^j := r_n^{j-1}-g_n^j, \quad\quad 1 \leq j \leq J, 
\end{align*}
then
\begin{align}\label{remainder estimate}
\|\fE h_n^J\|_q < \rho
\end{align}
for all functions $h_n^J$ of the form $h_n^J = r_n^J\chi$ with $\chi$ a measurable characteristic function.

Let $F_n := S_nf_n - r_n^J$.  By \eqref{remainder estimate}, our hypotheses on $\{f_n\}$, and the disjointness of the supports of $F_n$ and $r_n^J$, we have
\begin{align*}
{\bf A}_p-\rho \leq \liminf_n\|\fE F_n\|_q \leq \liminf_n {\bf A}_p\|F_n\|_p &= \liminf_n {\bf A}_p(1-\|r_n^J\|_p^p)^\frac{1}{p}\\ &\leq {\bf A}_p - C_p\limsup_n\|r_n^J\|_p^p
\end{align*}
for some constant $C_p$. Thus, after passing to a subsequence, we may assume that $\|r_n^J\|_p \lesssim \rho^\frac{1}{p}$, which for $\rho$ sufficiently small implies that
\begin{align}\label{S_nf_n-F_n}
\|S_nf_n-F_n\|_p < \frac{\varepsilon}{2}.
\end{align}

By the proof of Proposition \ref{truncation} (see \eqref{maximal property}), Proposition \ref{prop 2.1}, and the fact that $\|\fE f_n\|_q \rightarrow {\bf A}_p$, we have
\begin{align*}
\|g_n^1\|_p = \max_{\tau \in \scriptD}\|S_nf_n\chi_{\tau\cap\{|S_nf_n|\leq|\tau|^{-\frac{1}{p}}\}}\|_p \gtrsim \|\fE f_n\|_q^\frac{1}{\nu} \gtrsim 1
\end{align*}
for $n$ sufficiently large.  By our choice of the symmetry function $S$, we have that $\tau_n^1 = [0,1)^2$ for all $n$.  The remaining tiles may be written as
\begin{align*}
\tau_n^j = \boldsymbol\zeta_n^j + [0,2^{k_n^j})\times[0,2^{l_n^j}),
\end{align*}
where $k_n^j,l_n^j \in \Z$ and $\boldsymbol\zeta_n^j \in 2^{k_n^j}\Z \times 2^{l_n^j}\Z$.

Passing to a subsequence, we may assume that for each $1 \leq j \leq J$, either $k_n^j$ is constant (in $n$) or $|k_n^j|\rightarrow \infty$ and that either $l_n^j$ is constant or $|l_n^j| \rightarrow \infty$ and that either $\boldsymbol\zeta_n^j$ converges (as $n \to \infty$)  or $|\boldsymbol\zeta_n^j|\rightarrow \infty$. We say that an index $j$ is {\it good} if $k_n^j$ and $l_n^j$ are constant in $n$ and $\boldsymbol\zeta_n^j$ converges.  In fact, if $j$ is good, $\boldsymbol\zeta_n^j$ is eventually constant, so, passing to a subsequence, we may assume that for good $j$, $\boldsymbol\zeta_n^j$ is constant in $n$.  We say that $j$ is {\it bad} if it is not good. Note that the definition of $S_n$ implies that $j=1$ is good. 

Decompose $F_n$ as $F_n = G_n+B_n$, where
\begin{align*}
G_n := \sum_{j\text{ good}} g_n^j, \quad\quad B_n := F_n-G_n.
\end{align*}
The definition of the $g_n^j$ implies the existence of a bounded set $K$ and a finite constant $C$ such that $\supp G_n \subseteq K$ for sufficiently large $n$ and $\|S_nf_n\|_{L^\infty(K)} \leq C$ for all $n$.  Thus, by \eqref{S_nf_n-F_n} and \eqref{contradiction assumption}, we have
\begin{align*}
\liminf_n\|B_n\|_p &\geq \liminf_n\|S_nf_n-G_n\|_{L^p(\{|\xi|\geq n\}\cup\{|S_nf_n|\geq n\})} - \frac{\varepsilon}{2}\\
&=\liminf_n\|S_nf_n\|_{L^p(\{|\xi|\geq n\}\cup\{|S_nf_n|\geq n\})} - \frac{\varepsilon}{2}\\ &\geq \frac{\varepsilon}{2}.
\end{align*}
This implies that, after passing to a subsequence, 
\begin{align}\label{good part bound}
\|G_n\|_p \leq (1-(\varepsilon/2)^p)^\frac{1}{p} \leq 1-c\varepsilon^p
\end{align}
for some $c > 0$.  Since $1 \lesssim \|g_n^1\|_p \leq \|G_n\|_p$, we consequently have
\begin{align}\label{bad part bound}
\|B_n\|_p = (\|F_n\|_p^p - \|G_n\|_p^p)^\frac{1}{p} \leq 1-c_0
\end{align}
for some $c_0 \gtrsim 1$.  We will use the following lemma.  

\begin{lemma}\label{decoupling}
After passing to a subsequence,
\begin{align*}
\lim_{n\rightarrow \infty} (\|\fE F_n\|_q^q -\|\fE G_n\|_q^q - \|\fE B_n\|_q^q) = 0.
\end{align*}
\end{lemma}
Assuming Lemma~\ref{decoupling} (which will be proved shortly) for now, we continue with the proof of Proposition \ref{freq loc}.  Using \eqref{good part bound}, \eqref{bad part bound}, and the fact that $q > p$, we have that
\begin{align*}
{\bf A}_p - \rho \leq \liminf_n \|\fE F_n\|_q &= \liminf_n(\|\fE G_n\|_q^q + \|\fE B_n\|_q^q)^{1/q}\\
&\leq {\bf A}_p\liminf_n(\|G_n\|_p^q + \|B_n\|_p^q)^{1/q}\\
&\leq {\bf A}_p \liminf_n \max\{\|G_n\|_p,\|B_n\|_p\}^{1-p/q}\|F_n\|_p^{p/q}\\
&\leq {\bf A}_p\max\{1-c\varepsilon^p,1-c_0\}^{1-p/q}.
\end{align*}
Choosing $\rho$ sufficiently small gives a contradiction.
\end{proof}

\begin{proof}[Proof of Lemma \ref{decoupling}]
By the elementary inequality
\begin{align*}
||a+b|^q-|a|^q-|b|^q| \leq C_q(|a||b|^{q-1}+|a|^{q-1}|b|), \quad\quad a,b \in \C, \quad q \geq 1,
\end{align*}
H\"older's inequality, and the fact that $\|\fE g_n^j\|_q \lesssim \|g_n^j\|_p \leq 1$, we have
\begin{align*}
|\|\fE F_n\|_q^q - \|\fE G_n\|_q^q - \|\fE B_n\|_q^q| &\lesssim_{q,J} \sup_{j\text{ good, } j' \text{ bad}}\int(|\fE g_n^j|^{q-1}|\fE g_n^{j'}|+|\fE g_n^j||\fE g_n^{j'}|^{q-1})\\
&\lesssim \sup_{j\text{ good, } j' \text{ bad}}\bigl(\|\fE g_n^j\|_q^{q-2} + \|\fE g_n^{j'}\|_q^{q-2}\bigr)\|\fE g_n^j \fE g_n^{j'}\|_{\frac q2}\\
&\lesssim \sup_{j\text{ good, } j' \text{ bad}} \|\fE g_n^j \fE g_n^{j'}\|_{\frac q2}.
\end{align*}

Since there are only finitely many indices $1 \leq j,j' \leq J$, it suffices to prove that for each good $j$ and bad $j'$, each subsequence has a further subsequence along which 
$$
\lim_{n \to \infty} \|\scriptE g_n^j \scriptE g_n^{j'}\|_{q/2} = 0.
$$

Fix $j$ good, $j'$ bad, and a subsequence.  To simplify notation, we set $g_n:=g_n^j$ and $b_n:=g_n^{j'}$.  After rescaling and translating, we may assume that $\tau_n^j = Q := [0,1)^2$, so we set $\tau_n := \tau_n^{j'}$, $k_n:=k_n^{j'}$, $l_n:=l_n^{j'}$, $\boldsymbol\zeta_n:=\boldsymbol\zeta_n^{j'}$.  After passing to a further subsequence, one of the following holds:  
\begin{enumerate}
    \item[(1)]  $|k_n+l_n| \rightarrow \infty$,
    \item[(2)] $k_n+l_n$ is constant and $|k_n|+|l_n|\rightarrow \infty$,
    \item[(3)] $k_n$ and $l_n$ are constant and $|\boldsymbol\zeta_n| \rightarrow \infty$.
\end{enumerate}

\emph{Case 1.} Assume that $|k_n+l_n| \rightarrow \infty$. We recall that $|\tau_n| = 2^{k_n+l_n}$. 
  Passing to a  subsequence, either $k_n+l_n \rightarrow \infty$ or $k_n+l_n\rightarrow -\infty$.  We take $q_0<q_+<q<q_-$ with $\frac{2}{q}=\frac{1}{q_-}+\frac{1}{q_+}$, and we set $p_\pm := (\frac{q_\pm}{2})'$. Note that $p_- < p < p_+ < p_0$ and that $\|g_n\|_{p_\pm} \lesssim 1$.  If $k_n+l_n\rightarrow\infty$, then
\begin{align*}
\|\fE g_n \fE b_n\|_\frac{q}{2} \leq \|\fE g_n\|_{q_-}\|\fE b_n\|_{q_+} \lesssim \|g_n\|_{p_-}\|b_n\|_{p_+} \lesssim |\tau_n|^{\frac{1}{p_+}-\frac{1}{p}} = o(1). 
\end{align*}
Otherwise, $k_n+l_n\rightarrow-\infty$ and
\begin{align*}
\|\fE g_n \fE b_n\|_\frac{q}{2} \leq \|\fE g_n\|_{q_+}\|\fE b_n\|_{q_-} \lesssim \|g_n\|_{p_+}\|b_n\|_{p_-} \lesssim |\tau_n|^{\frac{1}{p_-}-\frac{1}{p}} = o(1).
\end{align*}

\emph{Case 2.} Assume that $k_n+l_n \equiv M$ and $|k_n|+|l_n|\rightarrow\infty$. Passing to a subsequence, we may assume by symmetry that $k_n \rightarrow \infty$ and $l_n \rightarrow -\infty$.  Recall that $\boldsymbol\zeta_n = (\zeta_{n,1},\zeta_{n,2})$ denotes the lower left corner of $\tau_n$.
Decompose $Q$ and $\tau_n$ as
\begin{align*}
Q = \bigcup_{i=0}^\infty Q^{i}_n, \quad\quad Q^{i}_n := \begin{cases}
Q \cap \{|\xi_2-\zeta_{n,2}| \lesssim 2^{l_n}\}, &i=0\\
Q \cap \{|\xi_2-\zeta_{n,2}| \sim 2^{l_n+i}\}, &i>0
\end{cases}
\end{align*}
and
\begin{align*}
\tau_n = \bigcup_{i=0}^\infty \tau_n^{i}, \quad\quad \tau_n^{i} :=
\begin{cases}
\tau_n \cap \{|\xi_1| \lesssim 1\}, &i=0\\
\tau_n \cap \{|\xi_1| \sim 2^{i}\}, &i>0,
\end{cases}
\end{align*}
and let $g_n^{i} := g_n\chi_{Q_n^{i}}$ and $b_n^i := b_n\chi_{\tau_n^{i}}$.  By our construction and H\"older's inequality,
\begin{equation}\label{E:Egnji}
    \|\fE g_n^{i}\|_q \lesssim \|g_n^{i}\|_p \lesssim \|g_n\|_\infty |Q_n^{i}|^{1/p} \lesssim \min\{1,2^{(l_n+i)/p}\},
\end{equation}
and similarly,
\begin{equation} \label{E:Egnj'i}
\|\fE b_n^i\|_q \lesssim \|b_n\|_\infty |\tau_n^{i}|^{1/p} \lesssim |\tau_n|^{-1/p}|\tau_n^{i}|^{1/p} \lesssim \min\{1,2^{(i-k_n)/p}\}.
\end{equation}
Furthermore, $Q_n^{i}$ is nonempty for at most 2 values of $i$ with $i \geq -l_n$, and similarly, $\tau_n^{i}$ is nonempty for at most 2 values of $i$ with $i \geq k_n$.   

By the triangle inequality, we have
\begin{align*}
\|\fE g_n \fE b_n\|_\frac{q}{2} \leq \sum_{i=0}^\infty\sum_{i'=0}^\infty\|\fE g_n^{i} \fE b_n^{i'}\|_\frac{q}{2}.
\end{align*}
By Cauchy--Schwarz, \eqref{E:Egnji},  \eqref{E:Egnj'i}, and the observation immediately following, the sum on those terms with $i \lesssim 1$ or $i' \lesssim 1$ is bounded above by
\begin{align*}
\Big(\sum_{i\lesssim 1}\sum_{i'=0}^\infty + \sum_{i=0}^\infty\sum_{i'\lesssim 1}\Big)\|\fE g_n^{i} \fE b_n^{i'}\|_\frac{q}{2} &\leq \Big(\sum_{i\lesssim 1}\sum_{i'=0}^\infty + \sum_{i=0}^\infty\sum_{i'\lesssim 1}\Big) \|\fE g_n^{i}\|_q\| \fE b_n^{i'}\|_q\\ 
&\lesssim 2^{l_n/p}+2^{-k_n/p} \to 0.
\end{align*}

We now sum the remaining terms, for which $i,i'\geq C$, for some large constant $C$.  Each $Q_n^{i}$ is contained in a bounded number of tiles of width $1$ and height $2^{\min\{l_n+i,0\}}$, and is also contained in the unit cube, and each $\tau_n^{i'}$ is contained in a bounded number of tiles of width $2^{\min\{i',k_n\}}$ and height $2^{l_n}$.  Since $i\geq C$ and $i'\geq C$, the tiles that make up $Q_n^i$ are separated from those that make up $\tau_n^{i'}$ by a distance of $2^{i'}$ horizontally and $2^{l_n+i}$ vertically.  Thus, by the bilinear extension theory for the hyperbolic paraboloid \cite{Le06, Va05}, there exists $r(q) < (\frac{q}{2})'$ such that for all $r(q) < r < (\frac{q}{2})'$,
\begin{align*}
\|\fE g_n^i \fE b_n^{i'}\|_\frac{q}{2} &\lesssim 2^{(i'+l_n+i)(2-\frac{4}{q}-\frac{2}{r})}\|g_n^i\|_r\|b_n^{i'}\|_r\\
&\lesssim 2^{(i'+l_n+i)(2-\frac{4}{q}-\frac{2}{r})}|Q_n^i|^\frac{1}{r}|\tau_n^{i'}|^\frac{1}{r}|\tau_n|^{-\frac{1}{p}}.
\end{align*}
Choosing $r$ sufficiently close to $(q/2)'$ and noting that $|Q_n^i|\leq\min\{1,2^{l_n+i}\}$,  $|\tau_n^{i'}|\leq\min\{2^M, 2^{l_n+i'}\}$, and that there are at most two nonempty $Q_n^i$ (respectively $\tau_n^{i'}$) with $i\geq -l_n$ (respectively $i'\geq k_n$), some arithmetic shows that
\begin{align*}
\sum_{i=C}^\infty\sum_{i'=C}^\infty\|\fE g_n^i \fE b_n^{i'}\|_\frac{q}{2} \lesssim 2^{M(1-\frac{1}{p})+k_n(2-\frac{4}{q}-\frac{2}{r})} = o(1).
\end{align*}

\emph{Case 3.} Assume that $k_n \equiv k$, $l_n \equiv l$, and $|\boldsymbol\zeta_n|\rightarrow \infty$.  Passing to a subsequence, we may assume by symmetry that $|\zeta_{n,1}|\rightarrow \infty$, where $\boldsymbol\zeta_n =: (\zeta_{n,1},\zeta_{n,2})$. Let $K$ be an integer such that $3\cdot2^K \leq \dist_1(Q,\tau_n) \leq 6\cdot2^K$, where $\dist_1(A,B)$ denotes the distance between $A$ and $B$ with respect to the $\xi_1$ coordinate direction.  For $n$ sufficiently large, we have $K \geq \max\{0,k\}$. Let $L := \max\{0,l\}$.  Then $Q$ is contained in the $2^K \times 2^L$ dyadic tile $\tilde{Q} := [0,2^K)\times[0,2^L)$.  We also let $\tilde{\tau}$ denote the $2^K \times 2^L$ dyadic tile containing $\tau_n$. (We suppress the dependence on $n$ to simplify later notation.)  The tile $\tilde{Q}$ and $\tilde{\tau}$ are horizontally separated by a distance of at least $2^K$.

For each integer $m \leq L$, decompose $\tilde{Q}$ and $\tilde{\tau}$ as
\begin{align*}
\tilde{Q} = \bigcup_i \tilde{Q}_{m,i}, \quad\quad \tilde{\tau} = \bigcup_i \tilde{\tau}_{m,i}
\end{align*}
where $\tilde{Q}_{m,i}$ (respectively $\tilde{\tau}_{m,i}$) are $2^{K}\times 2^m$ dyadic tiles contained in $\tilde{Q}$ (respectively $\tilde{\tau}$). Let $\boldsymbol a\in 2^K\Z \times 2^L\Z$ denote the lower left corner of $\tilde{\tau}$, i.e.~ $\tilde{\tau} =: \boldsymbol a+[0,2^K)\times[0,2^L)$.  We write $(m,i)\sim(m,i')$, and say that $\tilde{Q}_{m,i}$ and $\tilde{\tau}_{m,i'}$ are related, if the dyadic tiles $\tilde{Q}_{m,i}$ and $\tilde{\tau}_{m,i'}-\boldsymbol a$ are non-adjacent but have adjacent dyadic parents. If $(m,i)\sim(m,i')$, then $\tilde{Q}_{m,i}$ and $\tilde{\tau}_{m,i'}$ are separated by a distance of (at least) $2^K$ horizontally and $2^m$ vertically. Up to a set of measure zero, we have
\begin{align*}
\tilde{Q}\times\tilde{\tau} = \bigcup_{m\leq L}\bigcup_{i,i': (m,i)\sim(m,i')}\tilde{Q}_{m,i}\times\tilde{\tau}_{m,i'}.
\end{align*}
Let $g_{m,i} := g_n\chi_{\tilde{Q}_{m,i}}$ and $b_{m,i} := b_n\chi_{\tilde{\tau}_{m,i}}$ (again, suppressing dependence on $n$), and fix $r(q) < r < (\frac{q}{2})'$ as in Case 2 above.  Then following the Tao--Vargas--Vega \cite{TVV98} bilinear-to-linear argument,
\begin{align*}
\|\fE g_n \fE b_n\|_\frac{q}{2} &\leq \sum_{m\leq L}\bigg\|\sum_{i,i' : (m,i)\sim(m,i')}\fE g_{m,i}\fE b_{m,i'}\bigg\|_\frac{q}{2}\\
&\lesssim\sum_{m\leq L}\bigg(\sum_{i,i':(m,i)\sim(m,i')}\|\fE g_{m,i}\fE b_{m,i'}\|_\frac{q}{2}^t\bigg)^\frac{1}{t}, \quad\quad t:=\min\Big\{\frac{q}{2},\Big(\frac{q}{2}\Big)'\Big\}\\
&\lesssim\sum_{m\leq L}2^{(K+m)(2-\frac{4}{q}-\frac{2}{r})}\bigg(\sum_{i,i':(m,i)\sim(m,i')}\|g_{m,i}\|_r^t\|b_{m,i'}\|_r^t\bigg)^\frac{1}{t}\\
&\lesssim\sum_{m\leq L}2^{(K+m)(2-\frac{4}{q}-\frac{2}{r})}\bigg(\sum_{i,i':(m,i)\sim(m,i')}|\tilde{Q}_{m,i} \cap Q|^\frac{t}{r}|\tilde{\tau}_{m,i'}\cap \tau_n|^\frac{t}{r}|\tau_n|^{-\frac{t}{p}}\bigg)^\frac{1}{t}\\
&\lesssim \sum_{m\leq L}2^{(K+m)(2-\frac{4}{q}-\frac{2}{r})}2^\frac{L-m}{t}2^\frac{m}{r}2^\frac{k+m}{r}2^{-\frac{k+l}{p}}\\
&\sim 2^{K(2-\frac{4}{q}-\frac{2}{r}) + L(2-\frac{4}{q})-\frac{k+l}{p}+\frac{k}{r}}\\
&= o(1).
\end{align*}
This completes the proof of Lemma \ref{decoupling}.
\end{proof}

\section{Spatial localization}\label{sec_spatialloc}

In this section, we complete the proof of  Theorem \ref{thm_main}. This final part of the argument closely follows the methods presented in Sections 4 and 5 of \cite{BS-parab-ext-exist}, so we will omit some details, such as the proof of Proposition \ref{L^p profile decomposition} below.

We begin with the $L^2$-theory, which was developed by Rogers--Vargas \cite{RogersVargas} and Dodson--Marzuola--Pausader--Spirn \cite{DMPS18}.  Specifically,  \cite[Theorem~12]{DMPS18} implies the following:

\begin{theorem}\label{L^2 profile decomposition}
Let $\{f_n\}$ be a bounded sequence in $L^2$. After passing to a subsequence, there exists $J_0 \in \N \cup \{\infty\}$, symmetries $S_n^j \in \scriptS_2$, nonzero profiles $\phi^j \in L^2$, and errors $r_n^J \in L^2$ such that for each $J < J_0$ we have
\begin{align*}
f_n = \sum_{j=1}^J S_n^j\phi^j + r_n^J,
\end{align*}
and
\begin{enumerate}
\item[(i)] For all $j \neq j'$, $(S_n^j)^{-1}S_n^{j'} \rightharpoonup 0$ in the weak operator topology;
\item[(ii)] For all $J<J_0$, $\lim_{n\rightarrow\infty}(\|f_n\|_2^2-\sum_{j=1}^J\|\phi^j\|_2^2-\|r_n^J\|_2^2) = 0$;
\item[(iii)] For all $J<J_0$, $\lim_{n\rightarrow\infty}(\|\fE f_n\|_4^4-\sum_{j=1}^J\|\fE\phi^j\|_4^4-\|\fE r_n^J\|_4^4) = 0$;
\item[(iv)] For all $j$, $\phi^j = \operatorname{wk-lim}(S_n^j)^{-1}f_n$;
\item[(v)] The extensions of the errors tend to zero: $\lim_{J\rightarrow J_0}\limsup_{n\rightarrow\infty}\|\fE r_n^J\|_4 = 0$.
\end{enumerate}
\end{theorem}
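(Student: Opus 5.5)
The plan is to deduce Theorem \ref{L^2 profile decomposition} from the $L^2$ profile decomposition of Dodson--Marzuola--Pausader--Spirn \cite[Theorem~12]{DMPS18}, developed in the spirit of Rogers--Vargas \cite{RogersVargas}. The main work is checking that their symmetry group and decoupling statements match ours. I would construct the decomposition inductively. We have $\|\fE f\|_4 \lesssim \|f\|_2$ by Stein--Tomas--Strichartz, since $q=4$ when $p=2$. The input is a refined Strichartz (inverse) inequality: if $\limsup_n\|\fE f_n\|_4 \ge \eps$, then after passing to a subsequence there exist $S_n\in\scriptS_2$ with $S_n^{-1}f_n \rightharpoonup \phi \neq 0$ weakly in $L^2$, and $\|\phi\|_2 \gtrsim \eps^{C}\|f_n\|_2^{1-C'}$.

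To obtain this inverse inequality, I would use Proposition~\ref{prop 2.1} at $p=2$, which lies in the Stein--Tomas range. It yields a dyadic tile $\tau_n$ carrying a bounded, non-negligible piece of $f_n$. Rescaling with $\scriptS_2^+$ sends $\tau_n$ to the unit square. Then, since the $L^2\to L^4$ extension of a bounded function supported on the unit square satisfies $\|\fE g\|_4\lesssim \|\fE g\|_\infty^{\theta}\|g\|_2^{1-\theta}$ (by interpolating with $L^2\to L^6$ local estimates, or by H\"older with the $L^{q_0}$ bound), I would find a spacetime point $(t_n,\boldsymbol x_n)$ at which $|\fE g_n|$ is large. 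Modulating by $m_{(t_n,\boldsymbol x_n)}$ then produces a nonzero weak limit.

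The iteration proceeds as follows. Set $\phi^j := \operatorname{wk-lim}(S_n^j)^{-1}r_n^{j-1}$ and $r_n^j := r_n^{j-1}-S_n^j\phi^j$. Property (iv) follows from orthogonality (i) once (i) is established, and (ii) is the Hilbert-space Pythagorean identity for weak limits. Property (i) is proved by contradiction: if $(S_n^j)^{-1}S_n^{j'}$ failed to converge weakly to zero, then along a subsequence it would converge strongly to a symmetry, and this would force the weak limit defining $\phi^{j'}$ to vanish. The key fact here is that sequences in the group $\scriptS_2$ either converge to a group element or converge weakly to zero, which I would verify by hand from the parameter description in \S\ref{sec_symmetries}. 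Property (v) then follows from (ii), which gives $\sum_j\|\phi^j\|_2^2<\infty$, combined with the inverse inequality and its quantitative lower bound on $\|\phi^j\|_2$.

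I expect the main obstacle to be (iii), the $L^4$ decoupling. After approximating the profiles by nice functions, it reduces to showing $\|\fE(S_n^j\phi)\,\fE(S_n^{j'}\psi)\|_2\to0$ for orthogonal symmetry sequences, together with the corresponding cross terms against the remainders. For that reduction I would use that $\fE r_n^J$ converges weakly to zero after pulling back by $S_n^j$ (by (iv)), combined with local compactness (Rellich) of $\fE$ on bounded frequency sets. For the orthogonal bilinear terms, I would split according to which parameter diverges, mirroring Cases 1--3 of Lemma~\ref{decoupling}. Divergent scales are handled via H\"older with off-$L^2$ exponents, divergent frequency translations via bilinear estimates for separated tiles \cite{Le06,Va05}, and divergent modulations via dispersive decay of $\fE\phi$ for Schwartz $\phi$. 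Since all of this is carried out in \cite{DMPS18}, in the paper itself I would simply cite their Theorem~12 and note that their symmetries are generated by ours.
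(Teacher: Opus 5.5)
Your bottom line is to invoke \cite[Theorem~12]{DMPS18} and check that its symmetry group is generated by the dilations, frequency translations and modulations of \S\ref{sec_symmetries}. That is exactly what the paper does; it gives no argument beyond that citation. The self-contained reconstruction you wrap around the citation, however, breaks at the inverse-Strichartz step. Proposition~\ref{prop 2.1} at $p=2$ only yields a tile $\tau_n$ on which the truncated piece $g_n=f_n\chi_{\tau_n\cap\{|f_n|\le|\tau_n|^{-1/2}\|f_n\|_2\}}$ has $\|g_n\|_2\gtrsim\eps^{1/\nu}$. This sees only $|f_n|$, not its phase, and gives no lower bound on $\|\fE g_n\|_4$. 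Your inequality $\|\fE g\|_4\lesssim\|\fE g\|_\infty^{\theta}$ is correct via H\"older with the $L^{q_0}$ bound, $q_0<4$ (an $L^6$ bound sits on the wrong side of $4$). But it produces a point where $|\fE g_n|$ is large only if you already know $\|\fE g_n\|_4\gtrsim 1$. For instance, let $g_\delta$ be independent random signs on the $\delta$-squares $Q_k$ of $[0,1)^2$. Khintchine's inequality and the bilinear $L^2$ bound $\|\fE\chi_{Q_k}\fE\chi_{Q_{k'}}\|_2^2\lesssim\delta^5/\dist(Q_k,Q_{k'})$ for non-adjacent squares give $\mathbb{E}\|\fE g_\delta\|_4^4\lesssim\delta$. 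Now take $f=g_\delta+\eta\phi$, with $\phi$ a fixed bump on a tile disjoint from $[0,1)^2$ and $\eta$ small. Then the maximizing tile is $[0,1)^2$ and its piece has negligible extension, although $\|\fE f\|_4\gtrsim\eta$ for a good choice of signs.

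Iterating as in Proposition~\ref{truncation} can locate a piece with large extension, but then a second problem appears. That piece is $f_n$ restricted to an $n$-dependent level set, so a nonzero weak limit of $m_{(t_n,\boldsymbol x_n)}S_n^{-1}g_n$ does not give one for $m_{(t_n,\boldsymbol x_n)}S_n^{-1}f_n$: nothing in your argument prevents the remaining part of $f_n$ on the same tile from cancelling it. The missing ingredient is a refinement that is linear in the tile pieces, e.g.\ $\|\fE f\|_4\lesssim\sup_\tau\bigl(|\tau|^{-1/2}\|\fE(f\chi_\tau)\|_\infty\bigr)^{\theta}\|f\|_2^{1-\theta}$. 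One can get this from bilinear estimates, or from the Rogers--Vargas $L^s$, $s<2$, refinement \cite{RogersVargas} followed by a further extraction step. Since $\fE(f\chi_\tau)(t,\boldsymbol x)$ pairs a symmetry image of $f$ with a fixed test function, a large value directly gives a nonzero weak limit. It also gives the quantitative bound $\|\phi\|_2\gtrsim_\eps 1$ that your proof of (v) relies on, which currently inherits the same gap. This is the input on which \cite{DMPS18} builds. The rest of your outline is the standard scheme: orthogonality via the compact-or-weakly-null dichotomy in $\scriptS_2$, Pythagoras for (ii), and local compactness plus case analysis for (iii).
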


As in \cite{BS-parab-ext-exist}, we can upgrade Theorem \ref{L^2 profile decomposition} to an $L^p$-based profile decomposition for frequency localized functions (see \cite[Proposition 4.1]{BS-parab-ext-exist} for details).

\begin{proposition}\label{L^p profile decomposition}
Let $R>0$ and let $\{f_n\}$ be a sequence of measurable functions supported on $\{|\boldsymbol \xi|\leq R\}$ and satisfying $|f_n| \leq R$. After passing to a subsequence, there exist $J_0 \in \N \cup \{\infty\}$, $(t_n^j,\boldsymbol x_n^j) \in \R^{1+2}$, bounded measurable functions $\phi^j$ supported on $\{|\boldsymbol\xi|\leq R\}$, and remainders $r_n^J$ such that for each $J < J_0$ we have
\begin{align*}
f_n = \sum_{j=1}^Je^{i(t_n^j,\boldsymbol x_n^j)(\xi_1\xi_2,\boldsymbol\xi)}\phi^j + r_n^J,
\end{align*}
and
\begin{enumerate}
\item[(i)]For all $j\neq j'$, $\lim_{n\rightarrow\infty}(|t_n^j-t_n^{j'}| + |\boldsymbol x_n^j-\boldsymbol x_n^{j'}|) = \infty$;
\item[(ii)] If $\tilde{p} := \max\{p,p'\}$, then $\liminf_{n\rightarrow\infty}(\|f_n\|_p - (\sum_{j=1}^{J_0}\|\phi^j\|_p^{\tilde{p}})^\frac{1}{\tilde{p}} \geq 0$;
\item[(iii)] For all $J < J_0$, $\lim_{n\rightarrow\infty}(\|\fE f_n\|_q^q-\sum_{j=1}^J\|\fE\phi^j\|_q^q-\|\fE r_n^J\|_q^q) = 0$;
\item[(iv)] For all $j$, $\phi^j = \operatorname{wk-lim}e^{-i(t_n^j,\boldsymbol x_n^j)(\xi_1\xi_2,\boldsymbol \xi)}f_n$;
\item[(v)] The extensions of the errors tend to zero: $\lim_{J\rightarrow J_0}\limsup_{n\rightarrow\infty}\|\fE r_n^J\|_q = 0$.
\end{enumerate}
\end{proposition}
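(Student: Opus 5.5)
We derive Proposition~\ref{L^p profile decomposition} from Theorem~\ref{L^2 profile decomposition}, following \cite[Proposition 4.1]{BS-parab-ext-exist}. Since $|f_n|\leq R$ and $\supp f_n\subseteq\{|\boldsymbol\xi|\leq R\}$, the sequence is bounded in every $L^s$, and in particular in $L^2$. We apply Theorem~\ref{L^2 profile decomposition} and obtain symmetries $S_n^j\in\scriptS_2$, profiles $\phi^j$, and remainders. The first task is to show that the dilation and frequency-translation parts of each $S_n^j$ can be taken trivial, so that $S_n^j$ is a pure modulation $m_{(t_n^j,\boldsymbol x_n^j)}$. By (iv), $\phi^j$ is the weak limit of $(S_n^j)^{-1}f_n$. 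If the dilation parameters $\boldsymbol\lambda_n^j$ degenerate, or the frequency translations $\boldsymbol\xi_n^j$ escape to infinity, then $(S_n^j)^{-1}f_n$ converges weakly to zero, because the $f_n$ are uniformly bounded with uniformly bounded support. Testing against a fixed $L^2$ function and using H\"older with the $L^1$ and $L^\infty$ bounds makes this precise. That contradicts $\phi^j\neq 0$. Passing to a subsequence, the parameters $\boldsymbol\lambda_n^j$ and $\boldsymbol\xi_n^j$ converge, and we absorb their limits into $\phi^j$. The error incurred is small in $L^2$, and it is controlled in every other relevant norm by interpolation with the uniform $L^\infty$ and support bounds. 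We put it into the remainder. Property (i) in Theorem~\ref{L^2 profile decomposition} then reduces to divergence of the relative spacetime translations, which is (i) here.

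Next come the support and size properties and property (iv). Each $\phi^j$ is a weak limit of functions that are supported on $\{|\boldsymbol\xi|\leq R\}$ and bounded by $R$, once we conjugate the modulation (the modulus is unaffected). Both constraints are weakly closed, so $\phi^j$ inherits them, and (iv) follows directly.

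For (v), the remainders $r_n^J$ are supported in a ball of radius about $R$ and are bounded in $L^\infty$ uniformly in $n$ and $J$. The bound is of order $R$ plus $\sum_j\|\phi^j\|_\infty$; to keep it uniform we use the modified remainder described below. We interpolate $\|\fE r_n^J\|_q$ between $\|\fE r_n^J\|_4$, which is handled by (v) of the $L^2$ theorem, and $\|\fE r_n^J\|_{\tilde q}$ for some $\tilde q$ on the other side of $q$. The latter is bounded by $\|r_n^J\|_{\tilde p}$, which is finite by the hypothesis on $p_0$. The same interpolation shows that $q=4$ is no special obstruction.

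For (iii), we expand $\|\fE f_n\|_q^q$ using the elementary inequality from the proof of Lemma~\ref{decoupling}. The cross terms $\int|\fE\phi^j|^{q-1}|\fE\phi^{j'}(\cdot+\text{shift}_n)|$ vanish as the shifts diverge. For this we approximate by compactly supported functions in $L^q$, since $\fE\phi^j\in L^q$. Cross terms involving $r_n^J$ vanish because $(S_n^j)^{-1}r_n^J\rightharpoonup0$ and $\fE$ of a weakly null, frequency-localized bounded sequence converges to zero locally in $L^q$. This is local compactness, obtained by smoothing in frequency.

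The main obstacle is (ii), the $L^p$ almost-orthogonality with exponent $\tilde p=\max\{p,p'\}$, which is not a Hilbert-space identity. For $p\geq2$ the plan is to use duality: set
\begin{equation*}
h_n:=\sum_{j\leq J} c_j\, m_{(t_n^j,\boldsymbol x_n^j)}\bigl(|\phi^j|^{p-2}\overline{\phi^j}\bigr).
\end{equation*}
We pair $h_n$ with $f_n$ and use (iv) to obtain $\sum_j c_j\|\phi^j\|_p^p$. We then bound $\|h_n\|_{p'}$ asymptotically by $(\sum_j|c_j|^{p'}\|\phi^j\|_p^{p})^{1/p'}$. This bound holds because the modulations decouple in $L^{p'}$ asymptotically: the terms are frequency-localized, and widely separated modulations make the cross terms in $|\sum|^{p'}$ negligible. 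Optimizing over $c_j$ gives the $\ell^{p}$ bound, and by monotonicity of $\ell^s$ norms we get the $\ell^{\tilde p}$ bound. For $p<2$ the same duality argument yields only the $\ell^{p'}$ bound, which is exactly why $\tilde p=p'$ appears in that case. The delicate point is justifying the asymptotic decoupling of modulated functions in $L^{p'}$. I would first prove it for Schwartz profiles, using the Riemann--Lebesgue lemma for the oscillatory factor $e^{i(t_n^j-t_n^{j'},\,\boldsymbol x_n^j-\boldsymbol x_n^{j'})\cdot(\xi_1\xi_2,\boldsymbol\xi)}$, and then pass to general profiles by density. Finally, we let $J\to J_0$.
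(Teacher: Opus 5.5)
Your overall architecture matches the upgrade the paper defers to \cite[Proposition~4.1]{BS-parab-ext-exist}: strip dilations and frequency translations from Theorem~\ref{L^2 profile decomposition}, pass support and size bounds to the profiles through weak limits, decouple $\|\fE f_n\|_q^q$ on the spacetime side, and interpolate for (v). However, the step you single out as the main obstacle, (ii), rests on a false mechanism.

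Write $\theta_n^j(\boldsymbol\xi):=(t_n^j,\boldsymbol x_n^j)\cdot(\xi_1\xi_2,\boldsymbol\xi)$ and $u^j:=|\phi^j|^{p-2}\overline{\phi^j}$. The summands $c_je^{i\theta_n^j}u^j$ of $h_n$ all live on the same ball $\{|\boldsymbol\xi|\le R\}$, so diverging modulations separate them only on the spacetime side. Riemann--Lebesgue kills the $L^2$ cross terms $\int e^{i(\theta_n^j-\theta_n^{j'})}u^j\overline{u^{j'}}$. For $s\ne2$, however, the $L^s$ norm of such a sum tends to a phase average, not to an $\ell^s$ sum: $\|u+e^{i\theta_n}u\|_s^s\to\|u\|_s^s\cdot\frac{1}{2\pi}\int_0^{2\pi}|1+e^{i\sigma}|^s\,d\sigma$. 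This is strictly less than $2\|u\|_s^s$ if $s<2$ and strictly greater if $s>2$. If your decoupling held for every $p$, duality with $c_j\equiv1$ would give the $\ell^p$ bound also for $p<2$. The example $f_n=\chi_B+e^{i\theta_n}\chi_B$ refutes this: it has two profiles $\chi_B$, while $\|f_n\|_p^p\to|B|\frac{1}{2\pi}\int_0^{2\pi}|1+e^{i\sigma}|^p\,d\sigma<2|B|$.

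For $p\ge2$ your bound on $\|h_n\|_{p'}$ is still true, but the proof is different. Localize to small cubes on which the $u^j$ are nearly constant, apply H\"older and asymptotic $L^2$ orthogonality on each cube, then use $(\sum_ja_j^2)^{p'/2}\le\sum_ja_j^{p'}$. For $p<2$ you give no argument. Duality there needs the Hausdorff--Young-type bound $\limsup_n\|\sum_jc_je^{i\theta_n^j}u^j\|_{p'}\le(\sum_j|c_j|^p\|u^j\|_{p'}^p)^{1/p}$, which requires interpolating the trivial $\ell^1\to L^\infty$ bound against the asymptotic $\ell^2\to L^2$ one.

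Both regimes come out at once from the operators $\pi_n^jf:=e^{i\theta_n^j}[\psi*(\varphi e^{-i\theta_n^j}f)]$ that the paper uses in \S\ref{sec_spatialloc}, with $\varphi\equiv1$ on $\{|\boldsymbol\xi|\le R\}$. The map $f\mapsto(\pi_n^jf)_{j\le J}$ is a contraction $L^1\to\ell^\infty L^1$ and $L^\infty\to\ell^\infty L^\infty$. It has norm $1+o(1)$ from $L^2$ to $\ell^2L^2$, because $\pi_n^j(\pi_n^{j'})^*\to0$ in Hilbert--Schmidt norm for $j\ne j'$: its kernel is an oscillatory integral with fixed compactly supported amplitude and diverging quadratic phase. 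Riesz--Thorin then gives norm $1+o(1)$ from $L^p$ to $\ell^{\tilde p}L^p$, with exactly $\tilde p=\max\{p,p'\}$. Since $\pi_n^jf_n-e^{i\theta_n^j}(\psi*\phi^j)\to0$ in $L^p$ by (iv), letting $\psi$ tend to a delta yields (ii).

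There is a second gap, in (v) when $p>2$. Then $q<4$, so the other interpolation endpoint must be some $\tilde q\in(q_0,q)$, i.e.\ $\tilde p>2$. You therefore need $\limsup_n\|\fE r_n^J\|_{\tilde q}$ bounded uniformly in $J$. Your $L^\infty$ bound $R+\sum_{j\le J}\|\phi^j\|_\infty$ grows with $J$, and the promised ``modified remainder described below'' never appears. Moreover, $L^2$ orthogonality does not control $\|r_n^J\|_{\tilde p}$ for $\tilde p>2$, for the reason just explained.

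Estimate on the spacetime side instead: $\|\fE r_n^J\|_{\tilde q}\le\|\fE f_n\|_{\tilde q}+\|\sum_{j\le J}(\fE\phi^j)(\cdot+(t_n^j,\boldsymbol x_n^j))\|_{\tilde q}$. Here the translates genuinely decouple as $n\to\infty$, so $\limsup_n\|\fE r_n^J\|_{\tilde q}\lesssim_R1+(\sum_j\|\phi^j\|_{\tilde p}^{\tilde q})^{1/\tilde q}$. Since $\|\phi^j\|_{\tilde p}\le R^{1-2/\tilde p}\|\phi^j\|_2^{2/\tilde p}$ and $\tilde q>\tilde p$, the last sum is at most a constant times $(\sum_j\|\phi^j\|_2^2)^{\tilde q/\tilde p}$. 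That quantity is bounded by (ii) of Theorem~\ref{L^2 profile decomposition}. For $p\le2$ your interpolation works as written, since then $\tilde p\le2$ and $\|r_n^J\|_{\tilde p}\lesssim_R\|r_n^J\|_2$.
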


We now have what we need to prove our main result.

\begin{proof}[Proof of Theorem \ref{thm_main}]
Let $\{f_n\}$ be an $L^p$-normalized maximizing sequence. By Proposition \ref{freq loc}, after applying a symmetry, we have
\begin{align*}
\|\fE f_n^R\|_q \geq {\bf A}_p - \varepsilon(n,R),
\end{align*}
where $f_n^R := f_n\chi_{\{|\boldsymbol\xi|\leq R, |f_n|\leq R\}}$ and $\lim_{R\rightarrow\infty}\limsup_{n\rightarrow\infty}\varepsilon(n,R) = 0$. We will consider truncations of the form $f_n^m$ with $m \in \N$. Proposition \ref{L^p profile decomposition} states that, after passing to a subsequence in $n$, we may decompose $f_n^m$ as
\begin{align*}
f_n^m = \sum_{j=1}^J e^{i(t_n^{m,j},\boldsymbol x_n^{m,j})(\xi_1\xi_2,\boldsymbol\xi)}\phi^{m,j} + r_n^{m,J}, \quad\quad 1\leq J < J_0,
\end{align*}
with the decomposition satisfying properties (i)--(v) in the statement of that proposition.

Using properties (iii) and (v), then (ii), and the fact that $q > \tilde{p}$, we obtain
\begin{align*}
{\bf A}_p^q - o_m(1) &\leq \limsup_{n\rightarrow\infty}\|\fE f_n^m\|_q^q = \sum_{j=1}^{J_0}\|\fE\phi^{m,j}\|_q^q \leq {\bf A}_p^{\tilde{p}}\max_{j\leq J_0}\|\fE\phi^{m,j}\|_q^{q-\tilde{p}}\sum_{j=1}^{J_0}\|\phi^{m,j}\|_p^{\tilde{p}}\\
&\leq {\bf A}_p^{\tilde{p}}\max_{j\leq J_0}\|\fE\phi^{m,j}\|_q^{q-\tilde{p}} \leq {\bf A}_p^q\max_{j\leq J_0}\|\phi^{m,j}\|_p^{q-\tilde{p}}.
\end{align*}
Let $j = j_m$ be the index that maximizes $\|\fE\phi^{m,j}\|_q$, and set $\phi^m := \phi^{m,j_m}$ and $(t_n^m,\boldsymbol x_n^m) := (t_n^{m,j_m},\boldsymbol x_n^{m,j_m})$.
Then
\begin{align}\label{profile bounds}
1-o_m(1) \leq \|\phi^m\|_p \leq 1 \quad\quad\text{and}\quad\quad {\bf A}_p-o_m(1) \leq \|\fE\phi^m\|_p.
\end{align}
Since $e^{-i(t_n^m,\boldsymbol x_n^m)(\xi_1\xi_2,\boldsymbol\xi)}f_n^m \rightharpoonup \phi^m$ weakly in $L^p$ and $$\|\phi^m\|_p \geq (1-o_m(1))\limsup_{n\rightarrow\infty}\|f_n^m\|_p,$$ uniform convexity of $L^p$ implies that\footnote{See the proof of  \cite[Theorem 2.11]{LiebLoss}.}
\begin{align*}
\limsup_{n\rightarrow\infty}\|f_n^m-e^{i(t_n^m,\boldsymbol x_n^m)(\xi_1\xi_2,\boldsymbol\xi)}\phi^m\|_p = o_m(1).
\end{align*}
 By Proposition \ref{freq loc}, we also have
\begin{align}\label{f_n minus profile}
\limsup_{n\rightarrow\infty}\|f_n-e^{i(t_n^m,\boldsymbol x_n^m)(\xi_1\xi_2,\boldsymbol\xi)}\phi^m\|_p = o_m(1),
\end{align}
and thus
\begin{align}\label{difference}
\limsup_{n\rightarrow\infty}\|e^{i(t_n^m,\boldsymbol x_n^m)(\xi_1\xi_2,\boldsymbol\xi)}\phi^m - e^{i(t_n^{m'},\boldsymbol x_n^{m'})(\xi_1\xi_2,\boldsymbol\xi)}\phi^{m'}\|_p = o_{\min\{m,m'\}}(1).
\end{align}
Now consider the operator defined by
{\[\pi_n^mf(\boldsymbol\xi) := e^{i\theta_n^m(\boldsymbol\xi)}[\psi\ast(\varphi e^{-i\theta_n^m }f)](\boldsymbol\xi),\]
where $\theta_n^m(\boldsymbol\eta):=(t_n^m,\boldsymbol x_n^m)(\eta_1\eta_2,\boldsymbol\eta)$},
and $\varphi$ and $\psi$ are smooth, compactly supported functions on $\R^2$ with $0 \leq \varphi,\psi \leq 1$ and $\varphi(0) = \int\psi = 1$ and $\|\psi \ast (\varphi\phi^m) - \phi^m\|_p \leq \frac{1}{2}$. This operator satisfies the following properties (see display (4.2) in \cite{BS-parab-ext-exist}):
\begin{enumerate}
\item $\|\pi_n^m\|_{L^p \rightarrow L^p} \leq 1$;
\item If $|(t_n^m-t_n^{m'},\boldsymbol x_n^m-\boldsymbol x_n^{m'})|\rightarrow\infty$, then $\lim_{n\rightarrow\infty}\|\pi_n^m(e^{i\theta_n^{m'}(\boldsymbol\xi)}\phi^{m'})\|_p = 0$.
\end{enumerate}
Applying $\pi_n^m$ to \eqref{difference} and using (1), (2), and \eqref{profile bounds}, it follows that $|(t_n^m-t_n^{m'}, \boldsymbol x_n^m-\boldsymbol x_n^{m'})|$ is bounded for any $m,m' \geq M$ sufficiently large. Applying a frequency modulation to $f_n$ in \eqref{f_n minus profile}, we may assume that $(t_n^M, \boldsymbol x_n^M) \equiv 0$. Thus, for $m\geq M$, the sequence $(t_n^m,\boldsymbol x_n^m)$ is bounded. Passing to a subsequence, we may assume that $(t_n^m,\boldsymbol x_n^m) \rightarrow (t^m,\boldsymbol x^m)$ for all $m \geq M$. Consequently, by replacing $\phi^m$ with $e^{i(t^m,\boldsymbol x^m)(\xi_1\xi_2,\boldsymbol\xi)}\phi^m$, we may assume that $(t_n^m,\boldsymbol x^m) \rightarrow 0$ for all $m \geq M$. Hence, by \eqref{f_n minus profile}, the triangle inequality, and the dominated convergence theorem,
\begin{align*}
\limsup_{n\rightarrow\infty}\|f_n-\phi^m\|_p = o_m(1).
\end{align*}
This implies that $\{f_n\}$ is Cauchy and thus convergent in $L^p$. Since $\{f_n\}$ is a maximizing sequence, its limit is a maximizer.
\end{proof}


\section*{Acknowledgements}
DOS was funded by FCT/Portugal and the Recovery and Resilience Plan (PRR) through projects UID/04459/2025 and UID/PRR/04459/2025, and by project 10.54499/2023.17881.ICDT (SHADE).
BS was supported by a grant from the National Science Foundation, NSF DMS-2246906 and by a grant from the Simons Foundation [SFI-MPS-SFM-00011865, BS].


\end{document}